\newlength{\arrowsize}  
 \numberwithin{equation}{section}
\DeclareMathSymbol{\minus} {\mathord}{operators}{"2D} %
\def\sgn#1;#2{\mathbb{S}_{#1,#2}} 
\def\mcg#1;#2{\Gamma_{#1,#2}} 
\def\fg#1;#2{\Pi_{#1,#2}}
\def\tb#1;#2{\mathscr{K}_{\frac{#1}{#2}}}
\def \R {\mathbb{R}}
\theoremstyle{plain}
\newtheorem{theorem}{Theorem}[section]
\newtheorem{lem}[theorem]{Lemma}
\newtheorem*{maintheorem}{Main Theorem}
\theoremstyle{definition}
\newtheorem{df}[theorem]{Definition}
\begin{document}

\title[Ribbon graph moves]{A reduced set of moves on one-vertex ribbon graphs coming from links}
\date{\today}

\author[Abernathy]{Susan Abernathy}
\address{Department of Mathematics, Louisiana State University,
Baton Rouge, LA 70803, USA}
\email{sabern1@tigers.lsu.edu}

\author[Armond]{Cody Armond}
\address{Department of Mathematics, Louisiana State University,
Baton Rouge, LA 70803, USA}
\email{carmond@math.lsu.edu}
 
\author[Cohen]{Moshe Cohen}
\address{Department of Mathematics, Bar-Ilan University,
Ramat Gan 52900, Israel}
\email{cohenm10@macs.biu.ac.il}

\author[Dasbach]{Oliver T. Dasbach}
\address{Department of Mathematics, Louisiana State University,
Baton Rouge, LA 70803, USA}
\email{kasten@math.lsu.edu}

\author[Manuel]{Hannah Manuel}
\address{Department of Mathematics, Louisiana State University,
Baton Rouge, LA 70803, USA}
\email{hmanue3@tigers.lsu.edu}

\author[Penn]{Chris Penn}
\address{Department of Mathematics, Louisiana State University,
Baton Rouge, LA 70803, USA}
\email{coffee@math.lsu.edu}

\author[Russell]{Heather M. Russell}
\address{Department of Mathematics, University of Southern California,
LA, CA 90089-2532, USA}
\email{heathemr@usc.edu}

\author[Stoltzfus]{Neal W. Stoltzfus}
\address{Department of Mathematics, Louisiana State University,
Baton Rouge, LA 70803, USA}
\email{stoltz@math.lsu.edu}

\thanks {The fourth author was supported in part by NSF grants DMS-0806539 and DMS-0456275 (FRG). The third and seventh authors were partially supported by NSF VIGRE grant DMS 0739382.  The last author was also supported by DMS-0456275 (FRG). }

\begin{abstract}
Every link in $\R^3$ can be represented by a one-vertex ribbon graph. We prove a Markov type theorem on this subset of link diagrams. 
\end {abstract}
 
\maketitle

\section{Introduction}
A classical tool in knot theory is the Alexander polynomial. It is naturally related to Seifert surfaces (e.g.  \cite {Lickorish:KnotTheoryBook}). For a link in braid form, the structure of the Seifert surface is particularly simple. The surface can be viewed as a series of twisted bands between a vertically stacked collection of pancake disks. Vogel gives an algorithmic proof of the classical Alexander theorem by showing that every link diagram can be transformed into a closed braid by a sequence of Reidemeister II moves \cite {Vogel:Algorithm}. The Markov moves then describe the equivalence classes on braids given by the link isotopy classes of their closures \cite {Birman:BraidBook}.

A strikingly similar picture is available in the Jones polynomial context. Here the natural topological objects are Turaev surfaces, which are assigned to knot diagrams  \cite {Turaev:SimpleProof, DFKLS:GraphsOnSurfaces}. Ribbon graphs encode the projection of knots on their Turaev surfaces. Similar to the Alexander theorem, every link can be represented by a one-vertex ribbon graph. Algorithmically, this can be accomplished by using only Reidemeister II moves. In this paper we provide a Markov type theorem on one-vertex ribbon graphs.

Given a diagram $D$ for some link in $\R^3$, a state $s$ is a choice of one of the two local smoothings. These choices, called the $A$ and $B$ smoothings of a crossing, are shown in Figure \ref{smoothings}. Hence, there are $2^{\# \textup{ crossings in }D}$ different states $s$ for $D$ with corresponding smoothed diagrams $D_s$. 

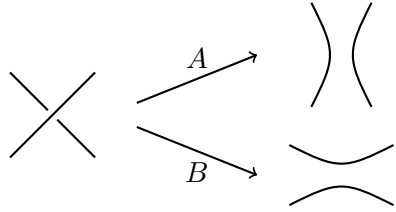
\begin{figure}[ht]
\begin{tikzpicture}[baseline=0cm, scale=0.8]
\draw[style=thick] (135:1)--(315:1);
\draw[color=white, line width=5pt] (45:1) -- (225:1);
\draw[style=thick] (45:1) -- (225:1);
\draw[style=thick, ->]  (1.4,0.2)  -- (2.4,0.6) node[above] {$A$} -- (3.4,1);
\draw[style=thick,->] (1.4,-0.2) -- (2.4,-0.6) node[below] {$B$} -- (3.4,-1);

\draw[xshift=4.8cm, yshift=-1cm, style=thick] (210:1) .. controls (240:0.1) and (300:0.1) .. (330:1);
\draw[xshift=4.8cm, yshift=-1cm, style=thick] (150:1) .. controls (120:0.1) and (60:0.1) .. (30:1);
\draw[xshift=4.8cm, yshift=1cm, style=thick] (300:1) .. controls (330:0.1) and (390:0.1) .. (420:1);
\draw[xshift=4.8cm, yshift=1cm, style=thick] (240:1) .. controls (210:0.1) and (150:0.1) .. (120:1);
\end{tikzpicture}
\caption{$A$ and $B$ smoothings for a link diagram} \label{smoothings}
\end{figure}

Many celebrated knot invariants are computed via a sum over states. In other words, we assign a quantity to each state and then take an appropriate weighted sum of these quantities. Both the Alexander polynomial \cite{Kauffman:OnKnots,CDR:Dimers} and the Jones polynomial \cite{Kauffman:StateModels} can be expressed in this way.  Their categorified counterparts knot-Floer homology (e.g. \cite{MOS:CombinatorialKnotFloer})  and Khovanov homology \cite{Khovanov:homology, Bar-Natan:Khovanov} also have state-sum models. In certain state sum models, the expressions simplify on the subclass of one-vertex ribbon graphs. 

The all-$A$ smoothing $D_A$ corresponds to the state where the $A$ smoothing is chosen for all crossings of $D$. The circles in $D_A$ are in one-to-one correspondence with the vertices of the ribbon graph for the embedding of the diagram $D$ on the Turaev surface.  In general $D_A$ will consist of several circles. The following lemma proves that every link has some diagram for which $D_A$ is a single circle and thus can be represented by a one-vertex ribbon graph \cite{CKS:QuasiTrees}.

\begin{lem}[see also \cite{DFKLS:DDD}] \label{onecircle}
Every link has a diagram for which the all-$A$ smoothing consists of a single circle.
\end{lem}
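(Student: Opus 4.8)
The plan is to start from an arbitrary diagram of the link and then repeatedly apply Reidemeister~II moves, each one chosen so that it splices two circles of the all-$A$ smoothing into a single circle, stopping once only one circle is left. Concretely, I would fix any diagram $D$ of the link and induct on the number $k$ of circles of $D_A$, the case $k=1$ being immediate.

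For the inductive step, assume $k\ge 2$. The first point is that, viewing $D_A$ as a closed $1$-manifold in the plane, its components are nested according to a forest, and this forces some complementary region $R$ of $D_A$ to have arcs of two \emph{distinct} circles $C_1\ne C_2$ on its boundary: if the nesting forest is a single tree, take a parent--child pair of circles, and otherwise take the outermost circles of two of its trees, both of which bound the unbounded region. Next I would pick a point $p_i$ on an arc of $C_i$ lying on $\partial R$, chosen away from the spots where crossings of $D$ were smoothed, so that near $p_i$ the diagram $D$ literally agrees with $D_A$ and the strand of $D$ through $p_i$ lies on $C_i$; then I would join $p_1$ to $p_2$ by an embedded arc $\gamma$ with interior in $R$, which after a small perturbation near the smoothed crossings may be taken disjoint from $D$ as well. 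Finally I would perform a single Reidemeister~II move guided by $\gamma$: finger a short subarc of $D$ through $p_2$ along $\gamma$ and push it across the strand of $D$ through $p_1$, creating two new crossings and leaving the rest of $D$ unchanged. The resulting diagram $D'$ represents the same link.

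It then remains to verify that $D'$ has exactly $k-1$ all-$A$ circles. Away from the finger the all-$A$ smoothing of $D'$ is identical to that of $D$; near the finger there are the two new crossings, and tracing the all-$A$ smoothing of $D'$ through them --- labeling the three successive segments of the strand through $p_1$ and of the fingered strand through $p_2$, and smoothing according to the convention of Figure~\ref{smoothings} --- one finds that the two new smoothings reconnect these segments so that the boundary arc of $C_1$ on $R$ and the boundary arc of $C_2$ on $R$ are joined into one circle, with no extra circle created. Hence $k$ drops by exactly one, and applying the inductive hypothesis to $D'$ closes the argument.

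The main obstacle, and the only place where the smoothing convention enters, is this last local bookkeeping: one must confirm that an R2 move between arcs lying on two \emph{different} all-$A$ circles merges those circles rather than spawning an additional small one. I expect this to come out cleanly and, in particular, to be independent of which strand is fingered over which --- so that the handedness of the R2 move is immaterial --- but it is the one step that has to be checked by hand from the explicit trace.
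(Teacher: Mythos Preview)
Your proposal is correct and follows essentially the same approach as the paper: reduce the number of circles in $D_A$ one at a time by performing a Reidemeister~II move along an arc in the complement of $D$ joining two distinct circles. The paper simply asserts the existence of such an arc (two circles ``adjacent'' in $\mathbb{R}^2-D$) and refers to a figure for the local merge, whereas you supply more detail---the nesting--forest argument to locate a shared region and an explicit flag on the local trace---but the structure and key idea are identical. Your expectation that the over/under choice in the R2 move is immaterial is also correct: either handedness yields the spliced pairing in the all-$A$ smoothing.
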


\begin{proof}
If $D_A$ is a single circle we are done. If $D_A$ consists of more than one circle, then there are two distinct circles in $D_A$ that are adjacent in the sense that they can be connected by an embedded arc in $\R^2-D$.  (Here we understand $\R^2-D$ to be the complement of the 4-valent graph underlying $D$ in the plane.) Perform a Reidemeister II move along that arc as shown in Figure \ref{makeonecirc} to obtain a new diagram $D'$. 

\begin{figure}[ht]
\begin{tikzpicture}[baseline=0cm, scale=1.1]
\draw[style=thick] (110:1)-- (250:1);
\draw[style=thick] (70:1) -- (-70:1);
\draw[style=dashed] (-.25,0) -- (.25,0);
\draw (0,-1.2) node {$D_A$};
\draw[style=thick,->] (1.0,0) -- (2,0);
\draw[xshift=3cm, style=thick] (110:1) .. controls (30:0.4) and (-30:0.4) .. (250:1);
\draw[xshift=3cm, color=white, line width=5pt] (70:1) .. controls (150:0.4) and (210:0.4) .. (-70:1);
\draw[xshift=3cm, style=thick] (70:1) .. controls (150:0.4) and (210:0.4) .. (-70:1);
\draw (3,-1.2) node {$D_A'$};
\end{tikzpicture}\caption{Merging circles along an arc}\label{makeonecirc}\label{RIIa}
\end{figure}
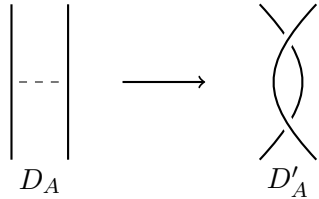

The all-$A$ smoothing of this new diagram, denoted $D'_A$, will have one less circle than $D_A$. In particular $D'_A$ is the result of merging the two adjacent circles from $D_A$ along the embedded arc.  Continue this process of merging along embedded arcs until we arrive at a diagram for which the all-$A$ smoothing is a single circle.
\end{proof}

The set of embedded arcs along which we merge circles is very important in the proof of the main result in this paper. For this reason, we give it the special notation $\mathfrak{S}_D$ and call it a {\it connecting set for $D$}. Section \ref{conn} proves a technical lemma about connecting sets.

Let $\mathcal{D}$ be the set containing all diagrams for all links in $\R^3$; let $\widetilde{\mathcal{D}}$ be the set $\mathcal{D}$ modulo the equivalence relation induced by the three Reidemeister  moves and orientation preserving isotopies in the plane. Then each $[D]\in \widetilde{D}$ represents a unique link $L$ in $\R^3$. 

Now let $\mathcal{D}_1$ be the subset of $\mathcal{D}$ consisting of diagrams for which the all-$A$ smoothing consists of a single circle. It follows from Lemma \ref{onecircle} that for each $L = [D]\in \widetilde{\mathcal{D}}$ there exists some $D'\in \mathcal{D}_1$ with $L = [D] = [D']$. In Section  \ref{MMoves} we present a set of Reidemeister-type moves on the elements of $\mathcal{D}_1$ which we call $M$-moves.

Let $\widetilde{\mathcal{D}_1}$ be the set $\mathcal{D}_1$ under the equivalence relation induced by these new $M$-moves and orientation preserving isotopies of the plane. We can then consider the following two canonical mappings.
$$\phi: \mathcal{D} \rightarrow \widetilde{\mathcal{D}} \textup{ and } \phi_1: \mathcal{D}_1 \rightarrow \widetilde{\mathcal{D}_1}$$ The first one comes from the standard  Reidemeister moves, and the second one comes from the $M$-moves.  In Section \ref{Reid} we prove the following theorem about the relationship between $\widetilde{\mathcal{D}}$ and $\widetilde{\mathcal{D}_1}$ and their associated canonical mappings.
\begin{maintheorem}\label{ReidThm}
Let $D, D'\in \mathcal{D}_1$. Then $\phi(D) = \phi(D')$ if and only if $\phi_1(D) = \phi_1(D')$. 
\end{maintheorem}

This paper was partially inspired by Manturov's work in which the elements of $\mathcal{D}_1$ are further equipped with marked points or base points and studied as bibracket structures \cite{Manturov:BracketCalculus}. 
Manturov proves a theorem similar to our Main Theorem in the language of bi-bracket structures \cite{Manturov:BracketCalculus}. The moves presented here are different than those found in \cite{Manturov:BracketCalculus}. Furthermore, because we present our moves directly on link diagrams in the set  $\mathcal{D}_1$, we eliminate the need for base points.

This project is the outcome of a semester long  NSF-VIGRE research course in knot theory at LSU. We are grateful to VIGRE for providing the organizational resources that made this research possible.

\section{Moves on ``single circle" diagrams}\label{MMoves}
We showed in Lemma \ref{onecircle} that every link in $\R^3$ has a diagram in the set $\mathcal{D}_1$. In this section we present a set of moves on the elements of $\mathcal{D}_1$ which we will call $M$-moves. These moves are well-defined, meaning each move takes a diagram in $\mathcal{D}_1$ to another diagram in $\mathcal{D}_1$.  

For the sake of comparison and because we will refer to them later, we first present the classical Reidemeister moves in Figure \ref{rmoves}. Each Reidemeister  move is subdivided into two types: $a$ and $b$ based on how the number of circles in the all-$A$ smoothing changes under the move.

Call the region where the Reidemeister move is being performed the distinguished region. Call the collection of  points where the boundary of the distinguished region intersects the diagram distinguished points.  Performing the all-$A$ smoothing outside of the distinguished region results in a collection of properly embedded arcs and circles with the arcs connecting the distinguished points.  Where it is important, we indicate these connections by labeling the boundary points with lowercase letters. For Reidemeister I and II we also pay attention to directionality distinguishing between moves and their inverses. For the remainder of our paper we will refer to the Reidemeister moves as $R$-moves.

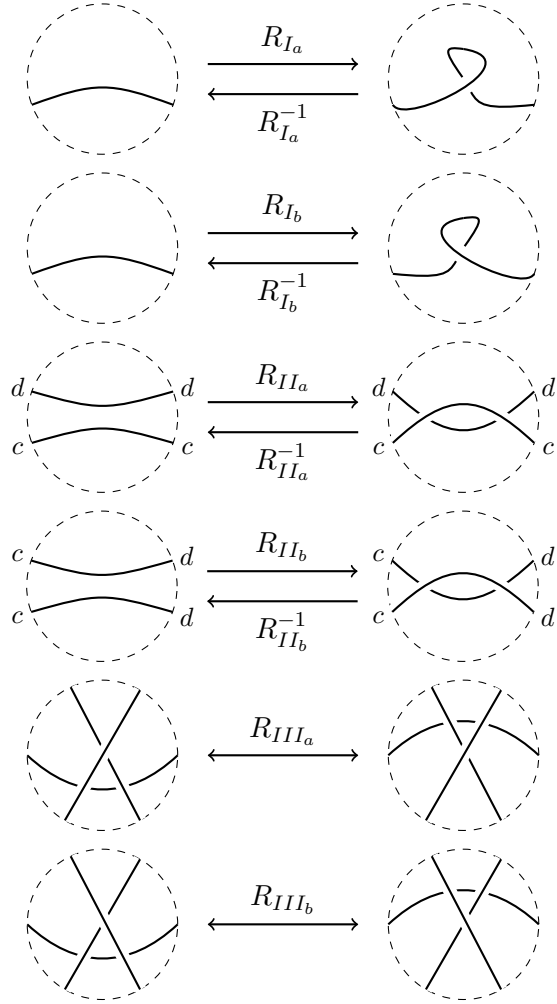
\begin{figure}[ht]
\begin{tikzpicture}[baseline=0cm]
\draw[style=dashed] (0,0) circle (1cm);
\draw[style=thick] (200:1) .. controls (200:0.1) and (340:0.1) .. (340:1);
\draw[style=thick, ->]  (1.4,0.2) -- (2.4,0.2) node[above]{$R_{I_{a}}$} -- (3.4,0.2);
\draw[style=thick,<-] (1.4,-0.2) -- (2.4,-0.2) node[below]{$R_{I_{a}}^{-1}$} -- (3.4,-0.2);
\draw[style=dashed] (4.8,0) circle (1cm);
\draw[xshift=4.8cm, style=thick] (200:1) .. controls (220:1) and (14:1) .. (90:0.4);
\draw[xshift=4.8cm, style=thick] (90:0.4) .. controls (120:0.5) and (125:0.5) .. (120:0.03);
\draw[xshift=4.8cm, style=thick] (300: 0.2) .. controls (300:0.4) and (305:0.5) .. (340:1);
\end{tikzpicture}

\vspace{0.2cm}

\begin{tikzpicture}[baseline=0cm]
\draw[style=dashed] (0,0) circle (1cm);
\draw[style=thick] (200:1) .. controls (200:0.1) and (340:0.1) .. (340:1);
\draw[style=thick, ->]  (1.4,0.2) -- (2.4,0.2) node[above]{$R_{I_{b}}$} -- (3.4,0.2);
\draw[style=thick,<-] (1.4,-0.2) -- (2.4,-0.2) node[below]{$R_{I_{b}}^{-1}$} -- (3.4,-0.2);
\draw[style=dashed] (4.8,0) circle (1cm);
\draw[xshift=4.8cm, style=thick] (340:1) .. controls (320:1) and (166:1) .. (90:0.4);
\draw[xshift=4.8cm, style=thick] (90:0.4) .. controls (60:0.5) and (55:0.5) .. (60:0.03);
\draw[xshift=4.8cm, style=thick] (240: 0.2) .. controls (240:0.4) and (235:0.5) .. (200:1);
\end{tikzpicture}
 
\vspace{0.2cm}

\begin{tikzpicture}[baseline=0cm]
\draw[style=dashed] (0,0) circle (1cm);
\draw[style=thick] (200:1) .. controls (240:0.1) and (300:0.1) .. (340:1);
\draw (200:1.2) node {\small $c$};
\draw (340:1.2) node {\small $c$};
\draw (160:1.2) node {\small $d$};
\draw (20:1.2) node {\small $d$};
\draw[style=thick] (160:1) .. controls (120:0.1) and (60:0.1) .. (20:1);
\draw[style=thick, ->]  (1.4,0.2) -- (2.4,0.2) node[above]{$R_{II_{a}}$} -- (3.4,0.2);
\draw[style=thick,<-] (1.4,-0.2) -- (2.4,-0.2) node[below]{$R_{II_{a}}^{-1}$} -- (3.4,-0.2);
\draw[style=dashed] (4.8,0) circle (1cm);
\draw[xshift=4.8cm, style=thick] (160:1) .. controls (240:0.4) and (300:0.4) .. (20:1);
\draw[xshift=4.8cm, color=white, line width=5pt] (200:1) .. controls (120:0.4) and (60:0.4) .. (340:1);
\draw[xshift=4.8cm, style=thick] (200:1) .. controls (120:0.4) and (60:0.4) .. (340:1);
\draw [xshift=4.8cm] (200:1.2) node {\small $c$};
\draw [xshift=4.8cm]  (340:1.2) node {\small $c$};
\draw  [xshift=4.8cm]  (160:1.2) node {\small $d$};
\draw [xshift=4.8cm]  (20:1.2) node {\small $d$};
\end{tikzpicture}

\vspace{0.2cm}
\begin{tikzpicture}[baseline=0cm]
\draw[style=dashed] (0,0) circle (1cm);
\draw[style=thick] (200:1) .. controls (240:0.1) and (300:0.1) .. (340:1);
\draw[style=thick] (160:1) .. controls (120:0.1) and (60:0.1) .. (20:1);
\draw (200:1.2) node {\small $c$};
\draw (340:1.2) node {\small $d$};
\draw (160:1.2) node {\small $c$};
\draw (20:1.2) node {\small $d$};
\draw[style=thick, ->]  (1.4,0.2) -- (2.4,0.2) node[above]{$R_{II_{b}}$} -- (3.4,0.2);
\draw[style=thick,<-] (1.4,-0.2) -- (2.4,-0.2) node[below]{$R_{II_{b}}^{-1}$} -- (3.4,-0.2);
\draw[style=dashed] (4.8,0) circle (1cm);
\draw[xshift=4.8cm, style=thick] (160:1) .. controls (240:0.4) and (300:0.4) .. (20:1);
\draw[xshift=4.8cm, color=white, line width=5pt] (200:1) .. controls (120:0.4) and (60:0.4) .. (340:1);
\draw[xshift=4.8cm, style=thick] (200:1) .. controls (120:0.4) and (60:0.4) .. (340:1);
\draw [xshift=4.8cm] (200:1.2) node {\small $c$};
\draw [xshift=4.8cm]  (340:1.2) node {\small $d$};
\draw  [xshift=4.8cm]  (160:1.2) node {\small $c$};
\draw [xshift=4.8cm]  (20:1.2) node {\small $d$};
\end{tikzpicture}

\vspace{0.2cm}
\begin{tikzpicture}[baseline=0cm]
\draw[style=dashed] (0,0) circle (1cm);
\draw[style=thick] (180:1) .. controls (240:0.7) and (300:0.7) .. (360:1);
\draw[style=thick, color=white, line width=5pt] (115:1) -- (300:1);
\draw[style=thick] (115:1) -- (300:1);
\draw[style=thick, color=white, line width=5pt] (60:1) -- (240:1);
\draw[style=thick] (60:1) -- (240:1);
\draw[style=thick, <->]  (1.4,0) -- (2.4,0) node[above]{$R_{III_{a}}$} -- (3.4,0);
\draw[style=dashed] (4.8,0) circle (1cm);
\draw[xshift=4.8cm, style=thick] (180:1) .. controls (120:0.7) and (60:0.7) .. (360:1);
\draw[xshift=4.8cm, style=thick, color=white, line width=5pt] (115:1) -- (300:1);
\draw[xshift=4.8cm, style=thick] (115:1) -- (300:1);
\draw[xshift=4.8cm, style=thick, color=white, line width=5pt] (60:1) -- (240:1);
\draw[xshift=4.8cm, style=thick] (60:1) -- (240:1);
\end{tikzpicture}

\vspace{0.2cm}
\begin{tikzpicture}[baseline=0cm]
\draw[style=dashed] (0,0) circle (1cm);
\draw[style=thick] (180:1) .. controls (240:0.7) and (300:0.7) .. (360:1);
\draw[style=thick, color=white, line width=5pt] (60:1) -- (240:1);
\draw[style=thick] (60:1) -- (240:1);
\draw[style=thick, color=white, line width=5pt] (115:1) -- (300:1);
\draw[style=thick] (115:1) -- (300:1);
\draw[style=thick, <->]  (1.4,0) -- (2.4,0) node[above]{$R_{III_{b}}$} -- (3.4,0);
\draw[style=dashed] (4.8,0) circle (1cm);
\draw[xshift=4.8cm, style=thick] (180:1) .. controls (120:0.7) and (60:0.7) .. (360:1);
\draw[xshift=4.8cm, style=thick, color=white, line width=5pt] (60:1) -- (240:1);
\draw[xshift=4.8cm, style=thick] (60:1) -- (240:1);
\draw[xshift=4.8cm, style=thick, color=white, line width=5pt] (115:1) -- (300:1);
\draw[xshift=4.8cm, style=thick] (115:1) -- (300:1);
\end{tikzpicture}

\caption{The $R$-moves}\label{rmoves}\end{figure}

The six moves shown in Figure \ref{mmoves} are called $M$-moves.  As with the $R$-moves, they are local relations in the sense that we apply an $M$-move within a distinguished region leaving the remainder of the diagram unchanged. The first move, called $M_0$, is the only $M$-move with a connectivity requirement like those found in the $R_{II}$ and $R_{III}$ moves in Figure \ref{rmoves}. Again we indicate connectivity requirements in $M_0$ with lowercase letters.

\begin{figure}[ht]
\begin{center}
\begin{picture}(162,50) 
\put(0,0){\scalebox{.7}{\includegraphics[width=1.2in]{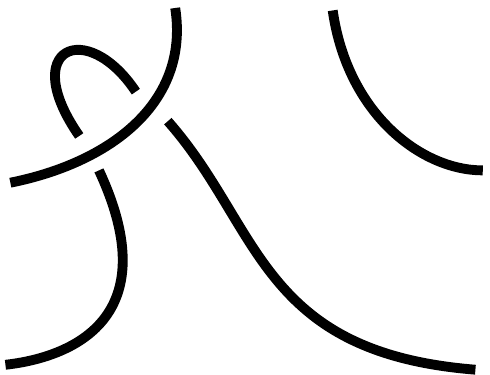}  \hspace{.2in} \raisebox{16pt}{$\stackrel{M_0}{\longleftrightarrow }$}   \hspace{.2in} \includegraphics[width=1.2in]{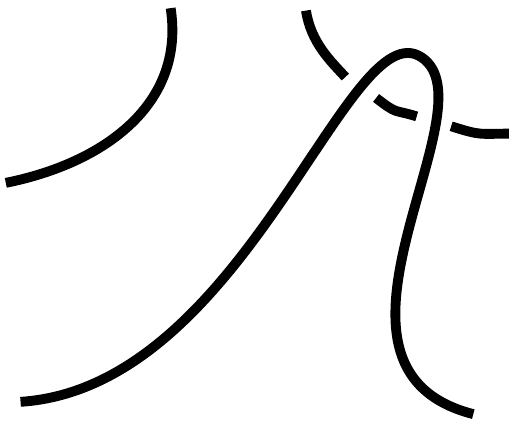}}}
\put(-3,-5){\tiny{$c$}}
\put(58,-5){\tiny{$c$}}
\put(-5,25){\tiny{$a$}}
\put(62,25){\tiny{$a$}}
\put(20,47){\tiny{$b$}}
\put(42,47){\tiny{$b$}}

\put(100,-3){\tiny{$c$}}
\put(158,-5){\tiny{$c$}}
\put(96,25){\tiny{$a$}}
\put(165,29){\tiny{$a$}}
\put(120,50){\tiny{$b$}}
\put(138,50){\tiny{$b$}}

\end{picture}

\vspace{.2in}

\scalebox{.7}{\includegraphics[width=1.2in]{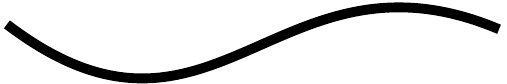} \hspace{.2in} \raisebox{10pt}{$\stackrel{M_{1}}{\longrightarrow}$}\hspace{.2in} \includegraphics[width=1.2in]{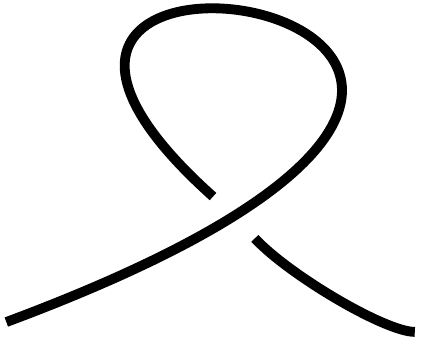}}

\vspace{.2in}

\scalebox{.7}{\includegraphics[width=1.2in]{singlestrand} \hspace{.2in} \raisebox{10pt}{$\stackrel{M_{2}}{\longrightarrow} $}\hspace{.2in} \includegraphics[width=1.2in]{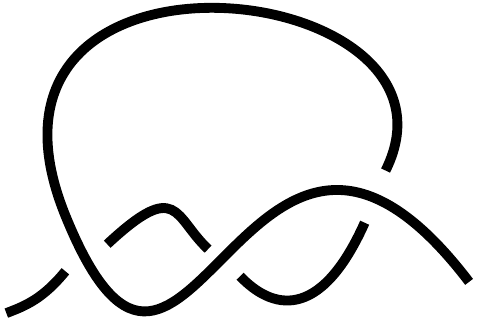}}

\vspace{.2in}

\scalebox{.7}{\includegraphics[width=1.2in]{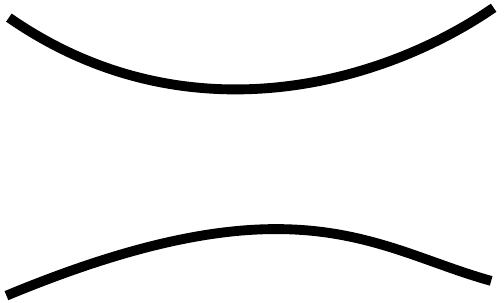} \hspace{.2in} \raisebox{30pt}{$\stackrel{M_3}{\longrightarrow}$}\hspace{.2in} \includegraphics[width=1.2in]{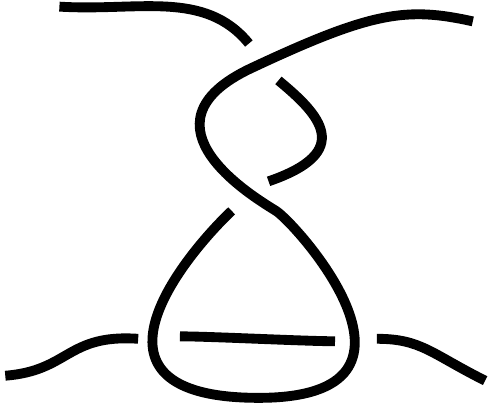}}

\vspace{.2in}

\scalebox{.7}{\includegraphics[width=1.2in]{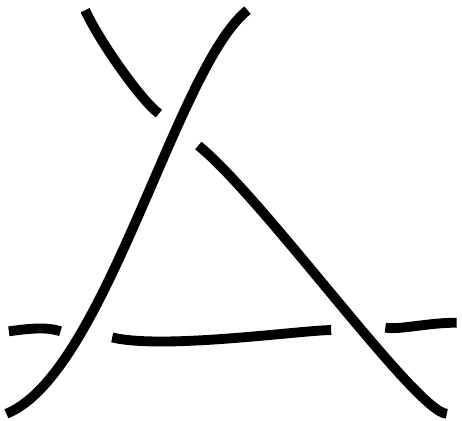} \hspace{.2in} \raisebox{30pt}{$\stackrel{M_4}{\longleftrightarrow} $}\hspace{.2in} \includegraphics[width=1.2in]{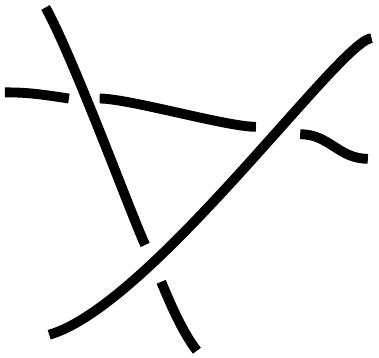}}
\end{center}
\caption{The $M$-moves}\label{mmoves}
\end{figure}

\begin{theorem}
The $M$-moves given in Figure \ref{mmoves} are well-defined on the set $\mathcal{D}_1$.
\end{theorem}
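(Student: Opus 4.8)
The plan is to read off, one move at a time, the effect on the all-$A$ smoothing, using a general decomposition of $D_A$ across the distinguished region. Fix an $M$-move, let $\Delta$ be its distinguished disk, and let $D'$ be the result of applying the move to a diagram $D\in\mathcal{D}_1$. Outside $\Delta$, performing the all-$A$ smoothing on $D$ produces a fixed collection of properly embedded arcs joining the distinguished points on $\partial\Delta$, together with some disjoint circles, and this part is literally unchanged in passing from $D$ to $D'$; call the induced pairing of the distinguished points the \emph{outside pairing}. Inside $\Delta$, the all-$A$ smoothing of the local tangle pictured in Figure \ref{mmoves} is again a system of arcs joining the distinguished points (the \emph{inside pairing}) together with possibly some closed loops. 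Reassembling, and using that the union of the inside and outside pairings is a disjoint union of even cycles, one obtains
\[
\#\pi_0(D_A)\;=\;\bigl(\text{\# loops inside }\Delta\bigr)+\bigl(\text{\# disjoint circles outside }\Delta\bigr)+\bigl(\text{\# cycles of the union of the two pairings}\bigr).
\]
Since $D\in\mathcal{D}_1$ forces the left-hand side to equal $1$, and the last of the three nonnegative summands on the right is at least $1$, it follows that there are no loops inside $\Delta$, no disjoint circles outside $\Delta$, and that the inside and outside pairings close up into a single cycle. The move is well-defined precisely when these three conclusions persist after the local tangle is replaced.

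As the outside data is untouched by the move, it is enough to establish the following local facts. For the one-directional moves $M_1, M_2, M_3$ only the target tangle imposes anything: its all-$A$ smoothing must be loop-free and must join the distinguished points by the same arcs as the source tangle does. For the two-directional moves $M_0$ and $M_4$, the all-$A$ smoothings of both tangles must be loop-free, and closing either one up by any outside pairing that is compatible with membership in $\mathcal{D}_1$ must yield a single cycle.

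The verifications for $M_1$ through $M_4$ are immediate inspections of Figure \ref{mmoves}: in each of these moves the all-$A$ smoothing of the pictured local tangle coincides, rel the distinguished points, with the trivial all-$A$ smoothing of the source tangle, and has no closed component, so the inside pairing is unchanged and no loop is created. (Concretely, the added curl in $M_1$ and $M_2$, the added clasp in $M_3$, and the local modification relating the two sides of $M_4$ each have trivial all-$A$ smoothing; such moves can alter the writhe but never the arc system of the all-$A$ state.) Hence each of $M_1,\dots,M_4$ carries $\mathcal{D}_1$ into itself.

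This leaves $M_0$, the one $M$-move whose two tangles have all-$A$ smoothings that pair the six distinguished points $a,a,b,b,c,c$ differently; this is where the work is. The connectivity requirement recorded by the labels in Figure \ref{mmoves} forces the outside pairing to match equal labels, i.e.\ $a$ with $a$, $b$ with $b$, $c$ with $c$, and one first checks that this matching is realizable by three disjoint arcs in the complement of $\Delta$, using the cyclic order of the labelled points around $\partial\Delta$. With the outside pairing fixed in this way, it remains to trace the two resulting closed walks — one for each side of $M_0$ — and confirm that each is a single cycle, and to check that neither local all-$A$ tangle contains a loop; both are finite checks on six points. I expect this bookkeeping for $M_0$ to be the main obstacle: one must correctly enumerate the outside connection patterns allowed by planarity, see that the figure's labels isolate exactly the ones for which the diagram lies in $\mathcal{D}_1$, and verify that the single-cycle condition really is symmetric between the two sides of the move. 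By contrast, $M_1$ through $M_4$ are settled by inspection.
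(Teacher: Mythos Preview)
Your proposal is correct and follows essentially the same approach as the paper: decompose the all-$A$ smoothing into inside and outside the distinguished region, observe that for $M_1$--$M_4$ the inside arc system is unchanged up to isotopy (hence the circle count is preserved), and for $M_0$ use the prescribed external connectivity to verify that both sides still close up to a single circle. Your write-up is considerably more detailed than the paper's, which dispatches the whole argument in three sentences; the only extraneous step is your remark about ``enumerating the outside connection patterns allowed by planarity'' for $M_0$---the labels are a hypothesis of the move, not something to be derived, so once you assume $D\in\mathcal{D}_1$ with that outside pairing you need only check that the other side's inside pairing also yields one cycle.
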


\begin{proof}
To prove this one considers the all-$A$ smoothings within the distinguished region before and after performing each $M$-move. Each smoothing is a disjoint union of embedded arcs connecting the distinguished points pairwise. 

For all but the $M_0$-move well-definedness follows from the fact that, up to isotopy, the all-$A$ smoothings in the distinguished region do not change. 
In the case of the $M_0$-move, the all-$A$ smoothing in the distinguished region changes but the external connectivity requirements guarantee that the new all-$A$ smoothing will still have one circle.  

\end{proof}

\section{Manipulating the connecting set}\label{conn}

In the proof of Lemma \ref{onecircle} we make use of the connecting set $\mathfrak{S}_D$ of a diagram $D$. In this section we prove a result about the flexibility of $\mathfrak{S}_{D}$ that is needed  for the proof of the Main Theorem. We begin by giving a formal definition of the connecting set.
\begin{df}\label{connectingset}
Let $D$ be an arbitrary link diagram. A {\it connecting set} $\mathfrak{S}_D$ for $D$ is a minimal set of arcs embedded in $\R^2-D$ with endpoints on $D$ along which performing $R_{II}$ moves yields an element of $\mathcal{D}_1$. When the diagram is clear from context, we write $\mathfrak{S}$ for the connecting set; in particular, we denote by $D_{\mathfrak{S}}$ the modification of $D$ by prescribed by $\mathfrak{S}$. This diagram is always in $\mathcal{D}_1$.
\end{df}
 
Part of the definition of connecting set is that the arcs it contains must lie in $\R^2-D$, so these arcs cannot intersect the locations of crossings from $D$. As we manipulate elements of the connecting set, we will need to pay special attention to these crossings, so we introduce some notation to simplify this task. 
\begin{df}
A {\it crossing arc} is an embedded arc in $\mathbb{R}^2-D_A$ that marks the location of a crossing from $D$. A set of  crossing arcs, one for each crossing of  $D$ will be called a {\it crossing set} and will be denoted by $\mathfrak{T}_D$. 
\end{df}
In Figure \ref{crossarc} we show a portion of a diagram $D$ and then the same portion decorated with crossing arcs after performing the all-$A$ smoothing. Unlike the connecting set, the crossing set for $D$ is unique. Given $D_A$ and the ccrossing set $\mathfrak{T}_D$, we can reconstruct the diagram $D$ up to isotopy. Moreover, note that $\R^2 -D$ is homeomorphic to $R^2-(D_A\cup \mathfrak{T}_D)$.

\begin{figure}[ht]

\begin{tikzpicture}[baseline=0cm]
\draw[style=thick,<->] (1.4,0)  -- (3.4,0);
\draw[style=thick] (340:1) .. controls (320:1) and (166:1) .. (90:0.4);
\draw[style=thick] (90:0.4) .. controls (60:0.5) and (55:0.5) .. (60:0.03);
\draw[style=thick] (240: 0.2) .. controls (240:0.4) and (235:0.5) .. (200:1);
\draw[xshift=4.8 cm] (0,0.5) circle (0.3 cm);
\draw[xshift=4.8 cm] (200:1) .. controls (200:0.1) and (340:0.1) .. (340:1);
\draw[xshift=4.8cm, line width=1.25pt] (0,-0.1) -- (0,0.2);
\end{tikzpicture}

\caption{Crossing arc}\label{crossarc}
\end{figure}
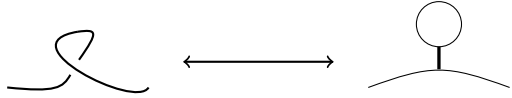

\begin{lem}\label{clearout}
Let $D\in \mathcal{D}$, and let $\mathfrak{S}$ be an arbitrary connecting set for $D$. If $x,y\in D$ are points on distinct circles in $D_A$ connected by an embedded arc $s \in \R^2-D$ that does not intersect any of the elements in $\mathfrak{S}$ then there is another connecting set $\mathfrak{S}'$ for $D$ such that 
\begin{enumerate}
\item{$s\in \mathfrak{S}'$ and} 
\item{the diagrams  $D_{\mathfrak{S}}$ and $D_{\mathfrak{S}'}$ are equivalent via $M_0$-moves.}
\end{enumerate}
\end{lem}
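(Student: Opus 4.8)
\section*{Proof proposal}

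The plan is to reduce the statement to an edge‑exchange argument for spanning trees. Write $n=\#(D_A)$ for the number of circles of the all‑$A$ smoothing. The first step is the structural observation that a connecting set is essentially a spanning tree: to a connecting set $\mathfrak{S}=\{s_1,\dots,s_k\}$ associate the graph $\Gamma(\mathfrak{S})$ whose vertices are the circles of $D_A$ and which has, for each $s_j$, one edge joining the two (a priori possibly equal) circles containing the endpoints of $s_j$. Performing an $R_{II}$‑move along $s_j$ and then taking the all‑$A$ smoothing is a band surgery on the previous all‑$A$ smoothing; it merges the two circles met by $s_j$ when these are distinct, and splits one circle into two when both endpoints lie on the same circle (planarity rules out the non‑separating outcome). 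Running through all the $s_j$ and bookkeeping the circle count yields $\#\bigl((D_{\mathfrak{S}})_A\bigr)=2\,c(\Gamma(\mathfrak{S}))+k-n$, where $c(\cdot)$ denotes the number of connected components. Since $D_{\mathfrak{S}}\in\mathcal{D}_1$ this equals $1$, which forces $c(\Gamma(\mathfrak{S}))=1$ and $k=n-1$; hence $\Gamma(\mathfrak{S})$ is a spanning tree (in particular no $s_j$ is a self‑arc). Conversely, any family of arcs whose associated graph is a spanning tree is a connecting set, minimality being automatic.

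Next, carry out the exchange. Since $x$ and $y$ lie on distinct circles $C$ and $C'$ of $D_A$ and $s$ is disjoint from $\mathfrak{S}$, the graph $\Gamma(\mathfrak{S})\cup\{s\}$ is connected with $n$ edges on $n$ vertices, so it has a unique cycle, which contains the edge of $s$; write the $\Gamma(\mathfrak{S})$‑path from $C$ to $C'$ as $C=C_{t_0},C_{t_1},\dots,C_{t_m}=C'$ with tree edges $e_1,\dots,e_m$ (so $e_i$ joins $C_{t_{i-1}}$ to $C_{t_i}$, and each $e_i$ is the edge of a well‑defined arc of $\mathfrak{S}$, since a tree has no multiple edges). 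Set $\mathfrak{S}'=(\mathfrak{S}\setminus\{\text{the arc of }e_m\})\cup\{s\}$. Its associated graph is again a spanning tree, so by the converse above $\mathfrak{S}'$ is a connecting set, and $s\in\mathfrak{S}'$; this gives conclusion (1).

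It remains to connect $D_{\mathfrak{S}}$ and $D_{\mathfrak{S}'}$ by $M_0$‑moves, which is the substantive part. The point is that the $M_0$‑move of Figure \ref{mmoves} is exactly the operation that slides one foot of an $R_{II}$‑clasp across a neighbouring $R_{II}$‑clasp: comparing the two local pictures of Figure \ref{mmoves} together with the prescribed pairings $a$–$a$, $b$–$b$, $c$–$c$ of the distinguished points shows that, at the level of all‑$A$ smoothings, such a slide replaces one edge of $\Gamma$ by another and the connectivity requirement is precisely what keeps the graph a spanning tree — i.e.\ it realizes the elementary tree‑exchange swapping an edge $C_tC_{t''}$ incident to a clasp for $C_{t'}C_{t''}$ when the edge $C_tC_{t'}$ (another clasp) is present. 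Granting this, process the path above from the $C'$‑end: starting from $D_{\mathfrak{S}}$, apply one $M_0$‑move to slide the $C_{t_{m-1}}$‑foot of the clasp along $e_m$ across the clasp along $e_{m-1}$, turning that clasp into one joining $C_{t_{m-2}}$ to $C'$; repeat across $e_{m-2},\dots,e_1$. After finitely many $M_0$‑moves (roughly the length of the path) this clasp joins $C_{t_0}=C$ to $C'$; at every stage $\Gamma$ stays a spanning tree, so every intermediate diagram lies in $\mathcal{D}_1$, and the sliding isotopies can be chosen, using that $s$ is disjoint from each arc of $\mathfrak{S}$, so that the terminal clasp runs exactly along $s$. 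The resulting diagram is $D_{\mathfrak{S}'}$, exhibited as the image of $D_{\mathfrak{S}}$ under a sequence of $M_0$‑moves, which is conclusion (2).

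The main obstacle is this last paragraph: first, confirming that the schematic $M_0$‑move of Figure \ref{mmoves} genuinely implements the clasp‑foot slide under the stated connectivity hypothesis, and then checking that the sequence of slides along the path can be carried out globally — that each intermediate arc family is an honest connecting set (the circle‑count formula of the first paragraph controls the all‑$A$ side, but one must verify the local pictures patch together) and that the final arc is isotopic rel $D$ to the prescribed $s$. It is exactly here that the hypothesis $s\cap\mathfrak{S}=\emptyset$ is used, and I expect the careful routing of the sliding corridors past the crossings of $D$ to be the fussiest point. A secondary item to pin down in the first paragraph is the band‑surgery count, namely that a connecting arc with both endpoints on one circle always raises the circle count by one; this is standard planarity but should be recorded.
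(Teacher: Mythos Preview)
Your spanning–tree reformulation is correct and clean: a connecting set is exactly a set of arcs whose incidence graph on the circles of $D_A$ is a spanning tree, and the abstract edge exchange $\mathfrak{S}'=(\mathfrak{S}\setminus\{e_m\})\cup\{s\}$ immediately gives conclusion~(1). This is a genuinely different viewpoint from the paper, which never names the tree structure and instead works purely with the planar region $P'$ cut out by $s$.

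However, your argument for conclusion~(2) has a real gap, and it is exactly the gap you flag in your final paragraph. Two problems:

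\emph{Adjacency of clasps.} To apply a single $M_0$-move sliding the foot of the $e_m$-clasp across the $e_{m-1}$-clasp, the two clasps must land at \emph{geometrically adjacent} points of the shared circle $C_{t_{m-1}}$. In general they do not: between their landing points there can be crossing arcs from $\mathfrak{T}_D$ and endpoints of other connecting arcs. The paper's proof is almost entirely devoted to this obstruction (its Cases~1 and~2, with four subcases for crossing arcs), and the analysis there is not a routine clean‑up: one must distinguish, for each intervening crossing arc, whether its two ends lie in the same or different components of $U\setminus\{s'\}$, and the $M_0$-move required is different in the two cases. Your tree abstraction discards exactly this planar information.

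\emph{The terminal arc is not $s$.} Your sliding procedure moves the foot of the $e_m$-clasp along the tree path $C_{t_{m-1}},\dots,C_{t_0}$, so the arc you end up with traces that path in the plane, not the arc $s$. The hypothesis $s\cap\mathfrak{S}=\emptyset$ does not force these to be isotopic rel~$D$: the region between them can contain crossing arcs and connecting arcs, and pushing the terminal arc across that region to reach $s$ is again the full content of the lemma. The paper avoids this by building the algorithm around the region $P'$ bounded by $s$ from the start, with a complexity $m=m_e+m_c$ that strictly decreases; when $m$ bottoms out, the surviving connecting arc is \emph{parallel} to $s$ by construction.

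In short, the tree picture efficiently dispatches (1), but it does not shortcut (2): the planar case analysis in the paper is the substance of the lemma, and your proposal defers rather than replaces it.
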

\begin{proof}

We begin by noting that the elements of $\mathfrak{S}$ are actually contained in $\R^2-D_A$. In other words, elements of the connecting set can be viewed as arcs connecting circles in the all-$A$ smoothing. 

Now consider the union $U = D_A \cup \mathfrak{S}$ embedded in $\mathbb{R}^2$ as well as its complement $V = \mathbb{R}^2-U$. The set $V$ will consist of a collection of components in the plane each of which has boundary comprised of some combination of arcs which will alternate between elements of  $\mathfrak{S}$ and portions of circles in $D_A$. Each bounded component of $V$ is contractible since $U$ is connected.

Since $s$ does not intersect any elements of $\mathfrak{S}$ it lies inside of a single component $P$ of $V$. Furthermore, $P-s$ will contain two components. At least one of these two components is bounded and contractible. Call it $P'$. 

The boundary of $P'$ will consist of arcs in $\mathfrak{S}$, portions of circles in $D_A$, and the arc $s$. We will think of this boundary as being a polygon with an even number of sides - every other side being a portion of some circle in $D_A$. As we traverse the boundary of $P'$ some arcs in $\mathfrak{S}_D$ will appear twice. We call these {\it internal arcs}. The others (ones that separate $P'$ from other components of $V-s$) will be called {\it external arcs}. We give $P'$ an orientation. This induces an orientation on the boundary of $P'$ and thus on the external arcs. 
Throughout the remainder of the paper, elements of the connecting set $\mathfrak{S}_D$ will be given by dotted lines and elements of the crossing set $\mathfrak{T}_D$ will be given given by thick solid lines.

Let $m_e$ be the number of edges in the boundary of $P'$ and $m_c$ be the number of crossing arcs inside of $P'$. Let $m=m_e+m_c$. We now give an algorithm which modifies  external arcs and reduces $m$ at each step. Technically we change $P'$ each time but will take liberties with notation and always call it the same thing. 

As we modify the external arcs, we are changing the elements of the connecting set thus producing a sequence of connecting sets $\mathfrak{S} = \mathfrak{S}_0, \mathfrak{S}_1, \ldots, \mathfrak{S}_n$ and an associated sequence of diagrams $D_{\mathfrak{S}}=D_{\mathfrak{S}_0}, \ldots, D_{\mathfrak{S}_n}.$  On the level of the diagrams each of the operations we define comes from the application of an $M_0$ move. 

Eventually these operations will reduce $P'$ to a square for which $m_e=2$ and $m_c=0$. In other words at the final step $P'$ contains no crossing arcs and is bounded by two segments of circles from $D_A$, one arc in $\mathfrak{S}_n$, and the special arc $s$. The arc in $\mathfrak{S}_n$ and the arc $s$ are isotopic. Therefore by performing this isotopy we may take $\mathfrak{S}_n = \mathfrak{S}'$, and the lemma will be proven. 

Beginning with step 0, at each step choose the first external arc $s'$ from $\mathfrak{S}_i$ that appears in the oriented boundary of $P'$ after the arc $s$. Next examine the circle on which the head of $s'$ is incident. Traveling from the head of $s'$ along the circle inside of $P'$ we encounter one of three things:
\begin{enumerate}
\item{a crossing arc from $\mathfrak{T}_D$ in $P'$,}
\item{an connecting arc from $\mathfrak{S}_i$ that is part of the boundary of $P'$,}
\item{ or the arc $s$.}
 \end{enumerate}
 
 Before we examine these cases we mention a fourth possibility we omitted. We could encounter a crossing arc or connecting arc with an endpoint on the circle but lying in a region adjacent to $P'$. We can always use an $M_0$ move to ``jump over" the endpoints of these arcs, so we can ignore them. Furthermore there are only finitely many of these points, so it will not prevent our algorithm from terminating. Figure \ref{jump} shows a picture of this situation.
 
 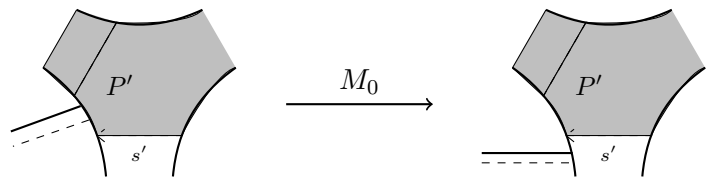
\begin{figure}[ht]
 
 \begin{tikzpicture}[baseline=0cm, scale=1.3]
 \draw[fill=lightgray]  (110:0.68) -- (190:0.68)-- (170:1) -- (130:1);
 \draw[fill=lightgray]  (50:1) ..controls(50:0.78) and (110:0.67) .. (110:0.68) -- (190:0.68) .. controls (200:0.6) and (220:0.6) .. (230:0.68) -- (310:0.68)..controls (310:0.66) and (10:0.8) .. (10:1);
\draw[style=thick] (130:1) .. controls (110:0.6) and (70:0.6) .. (50:1);
\draw[style=thick] (250:1) .. controls (230:0.6) and (190:0.6) .. (170:1);
\draw[style=thick] (10:1) .. controls (350:0.6) and (310:0.6) .. (290:1);
\draw[dashed] (110:0.68) -- (190:0.68);
\draw[dashed, biggertip-] (230:0.68) -- (310:0.68);
\draw( 270:0.70) node{\tiny $s'$};
\draw(180:0.2) node {\small $P'$};

 \draw [style=thick](200:0.64)--(200:1.4);
\draw [dashed](215:0.63)--(206:1.44);
\draw[style=thick,->] (1.5,-.2) -- (2.25,-.2) node[above] {$M_{0}$} -- (3,-.2);

 \draw[xshift=4.8cm, fill=lightgray]  (110:0.68) -- (190:0.68)-- (170:1) -- (130:1);
 \draw[xshift=4.8cm, fill=lightgray]  (50:1) ..controls(50:0.78) and (110:0.67) .. (110:0.68) -- (190:0.68) .. controls (200:0.6) and (220:0.6) .. (230:0.68) -- (310:0.68)..controls (310:0.66) and (10:0.8) .. (10:1);
\draw[xshift=4.8cm, style=thick] (130:1) .. controls (110:0.6) and (70:0.6) .. (50:1);
\draw[xshift=4.8cm,style=thick] (250:1) .. controls (230:0.6) and (190:0.6) .. (170:1);
\draw[xshift=4.8cm,style=thick] (10:1) .. controls (350:0.6) and (310:0.6) .. (290:1);
\draw[xshift=4.8cm,dashed] (110:0.68) -- (190:0.68);
\draw[xshift=4.8cm,dashed, biggertip-] (230:0.68) -- (310:0.68);
\draw[xshift=4.8cm] ( 270:0.70) node{\tiny $s'$};
\draw[xshift=4.8cm] (180:0.2) node {\small $P'$};

\draw [xshift=4.8cm, style=thick](-1.3, -0.7)--(-0.37, -0.7);
\draw [xshift=4.8cm, dashed](-1.3,-0.8)--(-0.35 ,-0.8);
\end{tikzpicture}
 
 \caption{Jumping over crossing and connecting arcs not in $P'$}\label{jump}
 \end{figure}
 
\noindent  \underline {Case 1}: We encounter a crossing arc from $\mathfrak{T}_D$ in $P'$.
 
 In this situation there are four possibilities, 
 \begin{enumerate}
 \item[a.]{The crossing arc is parallel to $s'$ with no crossing arcs, connecting arcs, or elements of $D_A$ between them.}
 \item[b.]{The crossing arc and $s'$ are parallel with some crossing arcs, connecting arcs, or elements of $D_A$ inside.}
 \item[c.]{The crossing arc is not parallel to $s'$ and joins two circles that lie in different connected components of $U-\{s'\}$}
 \item[d.]{The crossing arc is not parallel to $s'$ and joins two circles that lie in the same connected component of $U-\{s'\}$.}
 \end{enumerate} 
 
 In Case 1a, we can exchange the positions of the arc and the crossing without changing the diagram $D_{\mathfrak{S}_i}$ as shown in Figure \ref{arccross}.  This reduces $m_c$.
 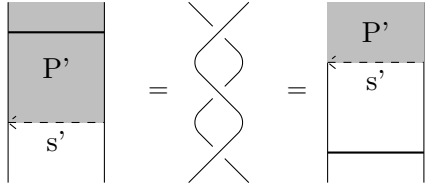
\begin{figure}

 \begin{tikzpicture}[scale=0.8, baseline=0.5cm] 
    \fill [color=lightgray] (-0.8,0.5) rectangle +(1.6, 2);
    \draw (-0.8,-0.5)--(-0.8, 2.5);
    \draw (0.8,-0.5)--(0.8, 2.5); 
    \draw (1.7,1) node{\small =};
    \draw [dashed, biggertip-] (-0.8,0.5)--(0.8,0.5);
    \draw [style=thick] (-.8,2)--(0.8,2);
    \draw (0,0.2) node{s'};
    \draw (0, 1.4) node {P'};
    \end{tikzpicture}
    \begin{tikzpicture}[scale=0.8, baseline=0.5cm, rounded corners=2mm]
    \draw [xshift=3cm](.5, -0.5)-- (0.1,-0.1);
    \draw [xshift=3cm](-.5,-0.5)  -- (0,0)-- (.5,.5)  -- (.1,0.9)--(-.5,1.5)--(0,2)--(0.5, 2.5);
    \draw [xshift=3cm](-.1,.1)--(-.5,.5)--(-.1,0.9);
    \draw [xshift=3cm](0.1,1.1)--(.5,1.5)--(0.1,1.9);
    \draw [xshift=3cm](-0.1, 2.1)--(-0.5, 2.5);
    \draw [xshift=3cm] (1.3,1) node{\small =};
    \end{tikzpicture}   
    \begin{tikzpicture}[scale=0.8, baseline=0.5cm]
 \draw [xshift=6cm] (-0.8,-0.5)--(-0.8, 2.5);
    \draw [xshift=6cm] (0.8,-0.5)--(0.8, 2.5); 
    \draw [xshift=6cm, style=thick] (-0.8, 0)--(0.8,0); 
       \fill [xshift=6cm, color=lightgray] (-0.8, 1.5) rectangle +(1.6, 1);
    \draw [xshift=6cm, biggertip-, dashed] (-0.8, 1.5)--(0.8,1.5);
    \draw [xshift=6cm] (0, 1.2) node{s'}; 
     \draw [xshift=6cm] (0, 2.0) node{P'};
   \end{tikzpicture}  

 \caption{Case 1a: We can switch the position of an arc and a crossing.}\label{arccross}
 \end{figure}
 
 In Case 1b, we can perform an $M_0$ move on the arc $s'$ to move the collection of crossing arcs, connecting arcs, and circles from $D_A$ out of $P'$. This reduces at least one of $m_e$ and $m_c$. 

  In Case 1c, the crossing arc joins two circles in different connected components of $U-\{s'\} = (D_a\cup \mathfrak{S}_i)-\{s'\}$.  Since the connecting set is minimal we know removing $s'$ breaks $U$ into two connected components. The crossing arc in this case has endpoints in both of these components. Thus we can perform an $M_0$ move on the arc $s'$ by leaving the head fixed while moving the tail so that $s'$ and the crossing arc are now parallel. We show this in Figure \ref{Case1c}. This will reduce $m_e$ since at least one internal arc will become an external arc.
 
 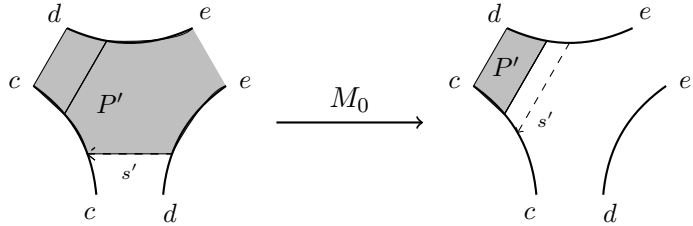
\begin{figure}
 \begin{tikzpicture}[baseline=0cm, scale=1.3]
 \draw[fill=lightgray]  (110:0.68) -- (190:0.68)-- (170:1) -- (130:1);
 \draw[fill=lightgray]  (50:1) ..controls(50:0.78) and (110:0.67) .. (110:0.68) -- (190:0.68) .. controls (200:0.6) and (220:0.6) .. (230:0.68) -- (310:0.68)..controls (310:0.66) and (10:0.8) .. (10:1);
\draw[style=thick] (130:1) .. controls (110:0.6) and (70:0.6) .. (50:1);
\draw (130:1.2) node {\small $d$};
\draw (50:1.2) node {\small $e$};
\draw[style=thick] (250:1) .. controls (230:0.6) and (190:0.6) .. (170:1);
\draw (250:1.2) node {\small $c$};
\draw (170:1.2) node {\small $c$};
\draw[style=thick] (10:1) .. controls (350:0.6) and (310:0.6) .. (290:1);
\draw (10:1.2) node {\small $e$};
\draw (290:1.2) node {\small $d$};
\draw (110:0.68) -- (190:0.68);
\draw[dashed, biggertip-] (230:0.68) -- (310:0.68);
\draw[dashed, biggertip-] (230:0.68) -- (310:0.68);
\draw( 270:0.70) node{\tiny $s'$};
\draw(180:0.2) node {\small $P'$};

\draw[style=thick,->] (1.5,-.2) -- (2.25,-.2) node[above] {$M_{0}$} -- (3,-.2);

\draw[xshift=4.5cm, fill=lightgray]  (110:0.68) -- (190:0.68)-- (170:1) -- (130:1);
\draw[xshift=4.5cm, style=thick] (130:1) .. controls (110:0.6) and (70:0.6) .. (50:1);
\draw[xshift=4.5cm] (130:1.2) node {\small $d$};
\draw[xshift=4.5cm] (50:1.2) node {\small $e$};
\draw[xshift=4.5cm, style=thick] (250:1) .. controls (230:0.6) and (190:0.6) .. (170:1);
\draw [xshift=4.5cm] (250:1.2) node {\small $c$};
\draw [xshift=4.5cm] (170:1.2) node {\small $c$};
\draw[xshift=4.5cm, style=thick] (10:1) .. controls (350:0.6) and (310:0.6) .. (290:1);
\draw [xshift=4.5cm] (10:1.2) node {\small $e$};
\draw [xshift=4.5cm] (290:1.2) node {\small $d$};
\draw [xshift=4.5cm] (110:0.68) -- (190:0.68);

\draw [xshift=4.5cm](110:0.68) -- (190:0.68);
\draw [xshift=4.5cm,dashed,->](90:0.62) -- (210:0.62);
\draw [xshift=4.5cm] (215:0.3) node{\tiny $s'$};

\draw[xshift=4.5cm] (150:0.75) node {\small $P'$};
\end{tikzpicture}

 \caption{Case 1c: Removing $s'$ disconnects the leftmost circle.}\label{Case1c}
 \end{figure}
 
 In Case 1d, the crossing arc joins two circles in the same connected component of $U-\{s'\}$.  Unlike the previous case, the crossing arc has both endpoints on the same component of $U-\{s'\}$. We can still  perform an $M_0$ move, but this time we leave the tail of $s'$ fixed and move the head of $s'$ to a position adjacent to the other end of the crossing arc. We show this in Figure \ref{Case1d}. This reduces $m_c$ by removing the crossing arc from $P'$. It also may reduce $m_e$ by either making some internal edges external or removing some external edges.

  \begin{figure}
 \begin{tikzpicture}[baseline=0cm, scale=1.3]
 \draw[fill=lightgray]  (110:0.68) -- (190:0.68)-- (170:1) -- (130:1);
 \draw[fill=lightgray]  (50:1) ..controls(50:0.78) and (110:0.67) .. (110:0.68) -- (190:0.68) .. controls (200:0.6) and (220:0.6) .. (230:0.68) -- (310:0.68)..controls (310:0.66) and (10:0.8) .. (10:1);
\draw[style=thick] (130:1) .. controls (110:0.6) and (70:0.6) .. (50:1);
\draw (130:1.2) node {\small $e$};
\draw (50:1.2) node {\small $d$};
\draw[style=thick] (250:1) .. controls (230:0.6) and (190:0.6) .. (170:1);
\draw (250:1.2) node {\small $d$};
\draw (170:1.2) node {\small $e$};
\draw[style=thick] (10:1) .. controls (350:0.6) and (310:0.6) .. (290:1);
\draw (10:1.2) node {\small $c$};
\draw (290:1.2) node {\small $c$};
\draw (110:0.68) -- (190:0.68);
\draw[dashed, biggertip-] (230:0.68) -- (310:0.68);
\draw[dashed, biggertip-] (230:0.68) -- (310:0.68);
\draw( 270:0.70) node{\tiny $s'$};
\draw(180:0.2) node {\small $P'$};

\draw[style=thick,->] (1.5,-.2) -- (2.25,-.2) node[above] {$M_{0}$} -- (3,-.2);

\draw[xshift=4.5cm, fill=lightgray]  (70:0.66) -- (350:0.66)-- (10:1) -- (50:1);
\draw[xshift=4.5cm, style=thick] (130:1) .. controls (110:0.6) and (70:0.6) .. (50:1);
\draw[xshift=4.5cm] (130:1.2) node {\small $e$};
\draw[xshift=4.5cm] (50:1.2) node {\small $d$};
\draw[xshift=4.5cm, style=thick] (250:1) .. controls (230:0.6) and (190:0.6) .. (170:1);
\draw [xshift=4.5cm] (250:1.2) node {\small $d$};
\draw [xshift=4.5cm] (170:1.2) node {\small $e$};
\draw[xshift=4.5cm, style=thick] (10:1) .. controls (350:0.6) and (310:0.6) .. (290:1);
\draw [xshift=4.5cm] (10:1.2) node {\small $c$};
\draw [xshift=4.5cm] (290:1.2) node {\small $c$};
\draw [xshift=4.5cm] (110:0.68) -- (190:0.68);

\draw [xshift=4.5cm](110:0.68) -- (190:0.68);
\draw [xshift=4.5cm,dashed,biggertip-](70:0.66) -- (350:0.62);
\draw [xshift=4.5cm] (45:0.3) node{\tiny $s'$};

\draw[xshift=4.5cm] (32:0.75) node {\small $P'$};
\end{tikzpicture}

 \caption{Case 1d: Removing $s'$ disconnects the rightmost circle.}\label{Case1d}
 \end{figure}
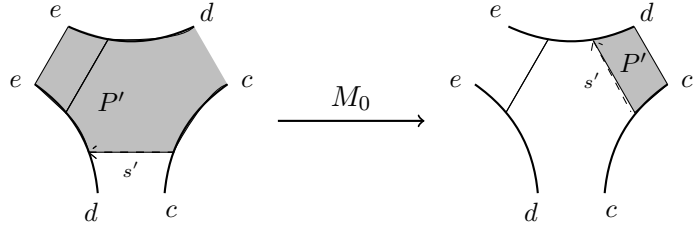
 
 \underline{Case 2}: We encounter another arc from $\mathfrak{S}_i$.
 
 In this situation, there are two possibilities.
 \begin{enumerate}
 \item[a.]{The arc we encounter is internal, or}
 \item[b.]{the arc we encounter is external.}
 \end{enumerate}
 
 In both Cases 2a and 2b, the two circles joined by the arc we encounter are in the same connected component of $U-\{s'\}$. Thus we can perform an $M_0$ move leaving the tail of $s'$ fixed and moving the head as shown in Figure \ref{Case2ab}. In Case 2a, this will change at least one internal edge to an external edge, so $m_e$ will go down. In Case 2b, we remove an external edge from the boundary of $P'$, so again $m_e$ goes down.
 
  \begin{figure}
  
  \begin{tikzpicture}[baseline=0cm, scale=1.3]
 \draw[fill=lightgray]  (50:1) ..controls(50:0.78) and (110:0.67) .. (110:0.68) -- (190:0.68) .. controls (200:0.6) and (220:0.6) .. (230:0.68) -- (310:0.68)..controls (310:0.66) and (10:0.8) .. (10:1);
\draw[style=thick] (130:1) .. controls (110:0.6) and (70:0.6) .. (50:1);
\draw (130:1.2) node {\small $e$};
\draw (50:1.2) node {\small $d$};
\draw[style=thick] (250:1) .. controls (230:0.6) and (190:0.6) .. (170:1);
\draw (250:1.2) node {\small $d$};
\draw (170:1.2) node {\small $e$};
\draw[style=thick] (10:1) .. controls (350:0.6) and (310:0.6) .. (290:1);
\draw (10:1.2) node {\small $c$};
\draw (290:1.2) node {\small $c$};
\draw [dashed](110:0.68) -- (190:0.68);
\draw[dashed, biggertip-] (230:0.68) -- (310:0.68);
\draw[dashed, biggertip-] (230:0.68) -- (310:0.68);
\draw( 270:0.70) node{\tiny $s'$};
\draw(180:0.2) node {\small $P'$};

\draw[style=thick,->] (1.5,-.2) -- (2.25,-.2) node[above] {$M_{0}$} -- (3,-.2);

\draw[xshift=4.5cm, fill=lightgray]  (70:0.66) -- (350:0.66)-- (10:1) -- (50:1);
\draw[xshift=4.5cm, style=thick] (130:1) .. controls (110:0.6) and (70:0.6) .. (50:1);
\draw[xshift=4.5cm] (130:1.2) node {\small $e$};
\draw[xshift=4.5cm] (50:1.2) node {\small $d$};
\draw[xshift=4.5cm, style=thick] (250:1) .. controls (230:0.6) and (190:0.6) .. (170:1);
\draw [xshift=4.5cm] (250:1.2) node {\small $d$};
\draw [xshift=4.5cm] (170:1.2) node {\small $e$};
\draw[xshift=4.5cm, style=thick] (10:1) .. controls (350:0.6) and (310:0.6) .. (290:1);
\draw [xshift=4.5cm] (10:1.2) node {\small $c$};
\draw [xshift=4.5cm] (290:1.2) node {\small $c$};
\draw [dashed, xshift=4.5cm] (110:0.68) -- (190:0.68);

\draw [xshift=4.5cm](110:0.68) -- (190:0.68);
\draw [xshift=4.5cm,dashed,biggertip-](70:0.66) -- (350:0.62);
\draw [xshift=4.5cm] (45:0.3) node{\tiny $s'$};

\draw[xshift=4.5cm] (32:0.75) node {\small $P'$};
\end{tikzpicture}

 \caption{Cases 2a and 2b: Removing $s'$ will always disconnect the rightmost circle.}\label{Case2ab}
 \end{figure}
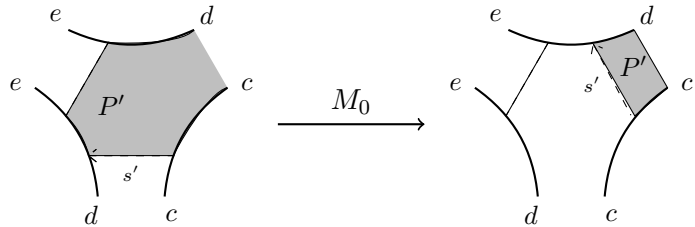

 \underline{Case 3}: The arc we encounter is $s$.
 
 In this situation, we have two possibilities.
 \begin{enumerate}
 \item[a.]{The arcs $s$ and $s'$ are parallel with nothing between them, or}
 \item[b.]{the arcs $s$ and $s'$ are parallel with some undirected arcs, crossing arcs, and circles from $D_A$ between them.}
 \end{enumerate}
 
 These two are analogous to Cases 1a and 1b. In Case 3a since $s$ and $s'$ are isotopic, we can assume that $s$ replaces $s'$ in the connecting set, and we are done. In Case 3b, we can perform an $M_0$ move to remove the undirected arcs, crossing arcs, and circles from $P'$. This will reduce either $m_c$ or $m_e$ depending on what was between $s$ and $s'$.
\end{proof}

\section{A Reidemeister-type theorem on one-vertex ribbon graphs}\label{Reid}
Recall that using the equivalence relations induced by $R$-moves and $M$-moves respectively we get the following two canonical mappings:
$$\phi: \mathcal{D} \rightarrow \widetilde{\mathcal{D}} \textup{ \hspace{.25in} and \hspace{.25in} } \phi_1: \mathcal{D}_1 \rightarrow \widetilde{\mathcal{D}_1}.$$ 
In this section we prove the following theorem which says that we can completely characterize the isotopy classes of links in $\mathbb{R}^3$ by restricting our focus to diagrams in $\mathcal{D}_1$ modulo $M$-moves.
\begin{maintheorem}
Let $D, D'\in \mathcal{D}_1$. Then $\phi(D) = \phi(D')$ if and only if $\phi_1(D) = \phi_1(D')$. 
\end{maintheorem}

One direction of the proof follows immediately from the definition of  the $M$-moves. Indeed the following statement can be verified by examining the pictures of the $M$-moves in Figure \ref{mmoves}.

\begin{lem}\label{easydir}
Given $D, D'\in \mathcal{D}_1$ if $\phi_1(D) = \phi_1(D')$ then $\phi(D) = \phi(D')$.
\end{lem}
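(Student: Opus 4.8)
The plan is to reduce the lemma to a purely local, move-by-move check. By definition, $\phi_1(D)=\phi_1(D')$ means that $D$ and $D'$ are joined by a finite sequence $D=D_0,D_1,\dots,D_n=D'$ in which each $D_{i+1}$ is obtained from $D_i$ by an orientation-preserving planar isotopy or by a single $M$-move (used in either direction, since we work with the equivalence relation generated by the moves). Orientation-preserving planar isotopies already belong to the equivalence relation defining $\phi$, and $R$-move equivalence is symmetric and transitive; so it is enough to show that if $D'$ is obtained from $D$ by one $M$-move then $\phi(D)=\phi(D')$. Moreover every $M$-move is local: it alters $D$ only inside a distinguished region and leaves the rest of the diagram---in particular the external arcs recorded by the lowercase boundary labels---unchanged. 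Hence it suffices to exhibit, for each of $M_0,M_1,M_2,M_3,M_4$ in Figure \ref{mmoves}, a finite sequence of Reidemeister moves supported in the distinguished region that carries the ``before'' tangle to the ``after'' tangle with the same boundary data; concatenating these sequences along $D_0,\dots,D_n$ then gives $\phi(D)=\phi(D')$.

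The remaining work is to read the Reidemeister sequences off Figure \ref{mmoves}. The moves $M_1$ and $M_2$ act on a single strand, and in each the ``after'' picture differs from the ``before'' picture by the insertion of a kink, so each is a single Reidemeister I move (together with a planar isotopy aligning the strand). The move $M_3$ acts on two strands, whose before/after configurations are related by a Reidemeister II move. The move $M_4$, drawn with a two-headed arrow, is an honest local equivalence of tangles: its two sides are related by a Reidemeister III move, possibly flanked by small planar isotopies to position the three strands. Finally, for $M_0$: although $M_0$ carries an external connectivity hypothesis, that hypothesis is used only to keep the output inside $\mathcal{D}_1$ and is irrelevant for the present direction; the two tangles in the $M_0$ picture are isotopic rel boundary after a short sequence of Reidemeister II moves (together with a Reidemeister III move if needed), so $M_0$ amounts to a ``detour'' of a strand across a local region. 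In each case the sequence one needs is visible directly in the figure.

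I do not expect a genuine obstacle: this is the easy direction of the Main Theorem, and its content is exactly the observation that every $M$-move picture is manifestly a composition of Reidemeister moves and planar isotopies. The only point requiring care is the bookkeeping---checking that the Reidemeister sequence chosen for a given move reproduces precisely the boundary connectivity of that move's ``after'' picture, so that splicing it into the unchanged remainder of the diagram yields exactly $D'$ and not some other diagram. Because Reidemeister moves are themselves local and do not disturb the boundary of the distinguished region, this check is immediate for each of $M_0,\dots,M_4$, and the lemma follows.
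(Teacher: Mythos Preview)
Your proposal is correct and follows essentially the same approach as the paper: reduce to a move-by-move check and observe that each $M$-move is a composite of $R$-moves. The paper's own proof is even terser---it simply states that by inspecting Figure~\ref{mmoves} every $M$-move can be expressed as a sequence of $R$-moves---so your more detailed case analysis is in the same spirit; note, however, that the precise $R$-move decompositions you guess (e.g.\ $M_2$ and $M_3$ as single Reidemeister moves) may need minor correction once the figures are consulted, though this does not affect the argument.
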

\begin{proof}
If $\phi_1(D) = \phi_1(D')$ then we can transform $D$ to $D'$ through a sequence of M-moves. Examining the M-moves in Figure \ref{mmoves} we see that each of them can be expressed as a sequence of $R$-moves. Since we can transform $D$ to $D'$ through a sequence of $R$- moves we conclude that $\phi(D) = \phi(D')$.
\end{proof}

\begin{theorem}
Say $D,D'\in \mathcal{D}_1$ such that $\phi(D) = \phi(D')$. Then $\phi_1(D) = \phi_1(D')$. In other words, two diagrams in $\mathcal{D}_1$ that are related by a sequence of $R$-moves are also related by a sequence of $M$-moves.
\end{theorem}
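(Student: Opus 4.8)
The plan is to fix a sequence of $R$-moves witnessing $\phi(D)=\phi(D')$, to resolve each intermediate diagram into $\mathcal{D}_1$ by a connecting set, and then to check that the resolutions of consecutive diagrams differ by $M$-moves. Concretely, since $\phi(D)=\phi(D')$ there is a finite sequence $D=D_0,D_1,\dots,D_n=D'$ in $\mathcal{D}$ with each $D_{i+1}$ obtained from $D_i$ by a single $R$-move (or an orientation preserving planar isotopy) supported in a distinguished disk $R_i\subset\R^2$. For each $i$ choose a connecting set $\mathfrak{S}_i$ for $D_i$, taking $\mathfrak{S}_0=\mathfrak{S}_n=\emptyset$ since $D_0,D_n\in\mathcal{D}_1$. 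Then $E_i:=(D_i)_{\mathfrak{S}_i}\in\mathcal{D}_1$ with $E_0=D$ and $E_n=D'$, so it suffices to show $\phi_1(E_i)=\phi_1(E_{i+1})$ for every $i$.

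I would reduce this to two facts. First, \emph{independence of the connecting set}: for a fixed diagram any two connecting sets give resolutions related by $M_0$-moves. Second, \emph{compatibility}: for the single move $D_i\to D_{i+1}$ one can choose connecting sets $\mathfrak{S}_i^{*}$ and $\mathfrak{S}_{i+1}^{*}$ so that $(D_i)_{\mathfrak{S}_i^{*}}$ and $(D_{i+1})_{\mathfrak{S}_{i+1}^{*}}$ are related by $M$-moves. Given both, independence applied at $E_i$ and at $E_{i+1}$ finishes the induction. Independence I would obtain from Lemma~\ref{clearout}: a connecting set is essentially a spanning tree on the adjacency graph of the circles of $(D_i)_A$, any two such trees are joined by elementary exchanges, and each exchange is realized by inserting the new arc through $M_0$-moves as in Lemma~\ref{clearout} (after an isotopy making it disjoint from the current connecting set) and deleting the now-redundant arc.

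For compatibility I would argue by cases on the type of $R$-move, keeping the distinguished disk $R_i$ fixed throughout. If $D_i\to D_{i+1}$ is an $R_{III}$ move the all-$A$ smoothing is unchanged up to isotopy, so (using that the circle-adjacency graph of $\R^2\setminus(D_i\cup R_i)$ is still connected) one may pick a connecting set disjoint from $R_i$ that serves for both diagrams; the two resolutions then differ only by the $R_{III}$ picture inside $R_i$, which up to $M_0$-moves and planar isotopy normalizing the nearby connecting arcs is the move $M_4$. If $D_i\to D_{i+1}$ is an $R_I$ or $R_{II}$ move the number of all-$A$ circles changes by one; say it increases. A connecting set for $D_{i+1}$ must then meet the new circle inside $R_i$, and I would arrange (via Lemma~\ref{clearout} and independence) that $\mathfrak{S}_{i+1}^{*}=\mathfrak{S}_i^{*}\cup\{s_0\}$, where $\mathfrak{S}_i^{*}$ is a connecting set for $D_i$ disjoint from $R_i$ and $s_0$ is a single arc inside $R_i$ joining the new circle to a neighbor. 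Then $(D_{i+1})_{\mathfrak{S}_{i+1}^{*}}$ is obtained from $(D_i)_{\mathfrak{S}_i^{*}}$ by performing inside $R_i$ the original $R_I$ or $R_{II}$ move followed by an $R_{II}$ along $s_0$; this composite local change is, after $M_0$-moves and planar isotopy, exactly one of $M_1,M_2,M_3$. The case where the circle count decreases is the inverse of this, and planar isotopies are trivial.

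The hard part, I expect, is compatibility -- in particular verifying that a single arc $s_0$ placed inside $R_i$ simultaneously reconciles the two connecting sets \emph{and} that the resulting local picture is literally one of the six moves of Figure~\ref{mmoves} rather than a longer composition; this is exactly where the specific shapes chosen in that figure matter, and where the freedom granted by Lemma~\ref{clearout} to slide the ambient connecting arcs away from $R_i$ is used in an essential way. A secondary technical point is the claim that a connecting set disjoint from (or minimally incident to) the disk $R_i$ always exists, which I would deduce from connectivity of the circle-adjacency graph of the complement of $D_i\cup R_i$.
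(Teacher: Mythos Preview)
Your plan coincides with the paper's: carry a connecting set $\mathfrak{S}_i$ along the $R$-move sequence and verify case by case that $(D_i)_{\mathfrak{S}_i}$ and $(D_{i+1})_{\mathfrak{S}_{i+1}}$ differ by $M$-moves, using Lemma~\ref{clearout} to reposition connecting arcs as needed. Your separation into ``independence of the connecting set'' and ``compatibility'' is a clean way to package what the paper does ad hoc inside each of its ten cases.

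Two assertions in your case split are false, however, and would make the argument break as written. Not every $R_I$ or $R_{II}$ move changes the all-$A$ circle count: $R_{I_a}$ leaves it fixed (the kink $A$-smooths back to the original arc) and is literally $M_1$, so your recipe $\mathfrak{S}_{i+1}^{*}=\mathfrak{S}_i^{*}\cup\{s_0\}$ would produce a non-minimal set there. And the two $R_{III}$ variants are not interchangeable: only $R_{III_a}$ matches $M_4$, while the paper disposes of $R_{III_b}$ by rewriting it as two $R_{II}$ moves and one $R_{III_a}$, so your blanket ``$R_{III}$ becomes $M_4$'' fails on the $b$-type. A further subtlety you pass over is that for $R_{III_a}$ a connecting arc can sit inside the small triangular face bounded by the three crossings, where no planar isotopy of the arc will remove it and your ``choose $\mathfrak{S}_i^{*}$ disjoint from $R_i$'' is not immediate; the paper invokes Lemma~\ref{clearout} to trade that arc for one just outside the disk before applying $M_4$. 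None of these invalidate your strategy, but they show the case division must be finer than ``$R_{III}$ preserves the smoothing, $R_I/R_{II}$ shift it by one.''
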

\begin{proof}
If $\phi(D)=\phi(D')$ then there exists a sequence of diagrams 
$$D = D_0 \overset{R_{0}}{\longrightarrow} D_1 \overset{R_1}{\longrightarrow} \cdots \overset{R_{m-2}}{\longrightarrow}D_{m-1}\overset{R_{m-1}}{\longrightarrow} D_m = D'.$$
In this context each pair of diagrams $D_{i}$ and $D_{i+1}$ differs by a single $R$-move denoted $R_{i}$.

In this proof, we will reinterpret each move $R_{i}$ as a sequence of $M$-moves. We will also build a connecting set $\mathfrak{S}_i$ associated to each diagram $D_i$. Note that since $D,D'\in \mathcal{D}_1$ we see that $\mathfrak{S}_0 = \mathfrak{S}_m = \emptyset$. So we will get a sequence
$$(D_0)_{\emptyset} \rightarrow (D_1)_{ \mathfrak{S}_1} \rightarrow \cdots \rightarrow (D_i)_{ \mathfrak{S}_i} \rightarrow  \cdots \rightarrow (D_{m-1})_{ \mathfrak{S}_{m-1}} \rightarrow (D_m)_{\emptyset}$$
where each arrow is either the identity or a sequence of  $M$-moves.

We consider the $R$-moves listed in Figure \ref{rmoves} and show how each of these can be reinterpreted in terms of a collection of $M$-moves. We begin with diagrams $D_{i}$ and $D_{i+1}$, a connecting set $\mathfrak{S}_i$ for $D_i$, and an $R$-move $R_i$ that takes $D_i$ to $D_{i+1}$. Using a combination of isotopy and repeated applications of Lemma \ref{clearout}, we construct a connecting set $\mathfrak{S}_{i+1}$ to accompany the diagram $D_{i+1}$. Sometimes is is also necessary to have intermediate diagrams and connecting sets.  

We have the following ten cases.

\begin{description}
\item[Case 1] $R_i$ is a $R_{I_a}$-move.
Note that $R_{I_a}$ is just the move $M_{1}$. Using isotopy we can move any elements of $\mathfrak{S}_i$ away from the distinguished region.  Since an application of $R_{I_a}$ does not change the number of circles in the all-$A$ smoothing, $\mathfrak{S}_i$ is also a connecting set for $D_{i+1}$. Set $\mathfrak{S}_{i+1} = \mathfrak{S}_i$ and $M_i = R_i$.

\item[Case 2] $R_i$ is a $R_{I_a}^{-1}$-move.
Note that $R_{I_a}^{-1}$ is just $M_{1}^{-1}$. There are finitely many connecting arcs incident on the loop that $R_i$ removes. By applying a sequence of $M_0$ moves we can move elements of $\mathfrak{S}_i$ out of the distinguished region obtaining a new connecting set for $D_i$ called $\mathfrak{S}_i'$. Set $\mathfrak{S}_{i+1}=\mathfrak{S}_i' $ and $M_i=R_i$.

\item[Case 3]  $R_i$ is a $R_{I_b}$-move.
In this case the number of circles in $(D_i)_A$ is one less than $(D_{i+1})_A$, so the connecting set $\mathfrak{S}_{i+1}$ should have one more arc than the connecting set $\mathfrak{S}_i$. Using isotopy we can move any elements of  $\mathfrak{S}_{i}$ out of the distinguished region. Now let $\mathfrak{S}_{i+1} = \mathfrak{S}_{i} \cup \{s\}$ where the arc $s$ shown in Figure \ref{RIb}. Then $(D_{i+1})_{\mathfrak{S}_{i+1}}$ is related to $(D_i)_{\mathfrak{S}_i'}$ via the move $M_2$.

\begin{figure}[ht]
\begin{tikzpicture}[scale=1.4, baseline=0cm]
\draw[style=thick] (340:1) .. controls (320:1) and (166:1) .. (90:0.4);
\draw[style=thick] (90:0.4) .. controls (60:0.5) and (55:0.5) .. (60:0.03);
\draw[style=thick] (240: 0.2) .. controls (240:0.4) and (235:0.5) .. (200:1);
\draw[dashed] (-.24, 0.05) -- (-.24,-0.3);
\draw (-.35,-0.16) node {\tiny $s$};
\end{tikzpicture}
\caption{$M_2$ is $R_{Ib}$ with an extra connecting arc $s$.} \label{RIb}
\end{figure}
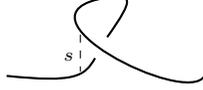

\item[Case 4] $R_i$ is a $R_{I_b}^{-1}$-move.
We begin by invoking Lemma \ref{clearout} to get the element $s$ of the connecting set shown in Figure \ref{RIb}. This forms a new connecting set $\mathfrak{S}_i'$. Now there may be other elements of the connecting set attached to the loop being removed by the move $R_i$. These can be moved off of the loop using a sequence of $M_0$ moves. Call this further modified connecting set $\mathfrak{S}_i''$. Then let $\mathfrak{S}_{i+1} = \mathfrak{S}_i''-\{s\}$. The diagram $(D_{i+1})_{\mathfrak{S}_{i+1}}$ is related to $(D_i)_{\mathfrak{S}_i'}$ via the move ${M_2}^{-1}$.

\item[Case 5] $R_i$ is a $R_{IIa}$-move.
There may be some number of parallel connecting arcs in $\mathfrak{S}_i$ that run through the ``corridor" where the $R_i$ is being performed. Beginning with the leftmost arc we can move each arc out of the corridor via an $M_0$-move that places one of its two endpoints on the lefthand circle involved in the distinguished region. We repeat this until all connecting arcs have been removed from the corridor. This produces a new connecting set $\mathfrak{S}_i'$.
Once the corridor is cleared, we invoke Lemma \ref{clearout} to get a new connecting set containing the arc $s$ shown in Figure \ref{RIIa}. Call this further modified connecting set  $\mathfrak{S}_i''$. Let $\mathfrak{S}_{i+1} = \mathfrak{S}_i''-\{s\}$. The diagram $(D_{i+1})_{\mathfrak{S}_{i+1}}$ is the same as $(D_i)_{\mathfrak{S}_i''}$.

\item[Case 6] $R_i$ is a $R_{IIa}^{-1}$-move.
We use $M_0$-moves to move any connecting arcs out of the distinguished region. This produces a new connecting set $\mathfrak{S}_i'$. Let $\mathfrak{S}_{i+1} = \mathfrak{S}_i'\cup \{s\}$ where $s$ is the connecting arc shown in Figure \ref{RIIa}. Then as in the previous case we have $(D_{i+1})_{\mathfrak{S}_{i+1}}$ is the same as $(D_i)_{\mathfrak{S}_i'}$.

\item[Case 7] $R_i$ is a $R_{IIb}$-move.
As in Case 5, there may be some number of parallel connecting arcs running through the ``corridor" where $R_i$ is to be performed. Using the same process a sequence of $M_0$-moves will remove all connecting arcs from the corridor producing a new connecting set $\mathfrak{S}_i'$.  Now let $\mathfrak{S}_{i+1} = \mathfrak{S}_i' \cup \{s\}$ where $s$ is the arc shown in Figure \ref{RIIb}. Then $(D_i)_{\mathfrak{S}_i'}$ is related to $(D_{i+1})_{\mathfrak{S}_{i+1}}$ via the move $M_3$.

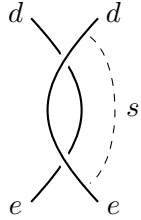
\begin{figure}[ht]

\begin{tikzpicture}[baseline=0cm, scale=1.3]
\draw[style=thick] (110:1) .. controls (30:0.4) and (-30:0.4) .. (250:1);
\draw[color=white, line width=5pt] (70:1) .. controls (150:0.4) and (210:0.4) .. (-70:1);
\draw[style=thick] (70:1) .. controls (150:0.4) and (210:0.4) .. (-70:1);
\draw[dashed] (0.26,0.75) .. controls (0.6,0.5) and (0.6,-0.5) .. (0.26,-0.75);
\draw (-0.5,1) node{$d$};
\draw (0.5,1) node{$d$};
\draw (-0.5,-1) node{$e$};
\draw (0.5,-1) node{$e$};
\draw (0.7,0) node {$s$};
\end{tikzpicture}

\caption{The connecting arc $s$ for $R_{IIb}$}\label{RIIb}
\end{figure}

\item[Case 8] $R_i$ is a $R_{IIb}^{-1}$-move.
We begin by using $M_0$-moves to clear any connecting arcs from the distinguished region. This yields a new connecting set $\mathfrak{S}_i'$. Then we invoke Lemma \ref{clearout} to get a new connecting set $\mathfrak{S}_i''$ containing the arc $s$ shown in Figure \ref{RIIb}. Let $\mathfrak{S}_{i+1} = \mathfrak{S}_i'' - \{s\}$. Then $(D_i)_{\mathfrak{S}_i'}$ is related to $(D_{i+1})_{\mathfrak{S}_{i+1}}$ via the move $M_3^{-1}$.

\item[Case 9] $R_i$ is an $R_{IIIa}$-move.
We first use isotopy and possibly some $M_0$-moves to move any connecting arcs out of the distinguished region except for a possible connecting arc in the triangular region enclosed by the three crossings involved in the move. This yields a new connecting set $\mathfrak{S}_i'$. 

If there is a connecting arc within that triangular region, it will be one of the two shown in Figure \ref{RIIIa}. We invoke Lemma \ref{clearout} to get another arc $s$ which which is also shown in Figure \ref{RIIIa}. Call this new connecting set $\mathfrak{S}_i''$.  Since both the arc in the triangular region and the arc $s$ connect the same circles in $(D_{i})_A$ the connecting arc within the triangular region will not be present in $\mathfrak{S}_i''$. Now let $\mathfrak{S}_i'' = \mathfrak{S}_{i+1}$. Then $(D_i)_{\mathfrak{S}_i''}$ and $(D_{i+1})_{\mathfrak{S}_{i+1}}$ are related by the move $M_4$. 

\begin{figure}[ht]
\begin{tikzpicture}[baseline=0cm]
\draw[style=thick] (180:1) .. controls (240:0.7) and (300:0.7) .. (360:1);
\draw[style=thick, color=white, line width=5pt] (110:1) -- (320:1);
\draw[style=thick] (110:1) -- (320:1);
\draw[style=thick, color=white, line width=5pt] (70:1) -- (220:1);
\draw[style=thick] (70:1) -- (220:1);
\draw[dashed] (-0.3,0)--(0.3,0);
\draw (0,-.1) node {\tiny s};
\draw (0:2) node{\small or};

\draw[xshift=4cm, style=thick] (180:1) .. controls (240:0.7) and (300:0.7) .. (360:1);
\draw[xshift=4cm, style=thick, color=white, line width=5pt] (110:1) -- (320:1);
\draw[xshift=4cm, style=thick] (110:1) -- (320:1);
\draw[xshift=4cm, style=thick, color=white, line width=5pt] (70:1) -- (220:1);
\draw[xshift=4cm, style=thick] (70:1) -- (220:1);
\draw[xshift=4cm, dashed] (0.3,0)--(0,-.4);
\draw[xshift=4cm] (0,-0.1) node{\tiny s};

\draw [xshift=4cm, ->] (0:1.7)--(0:3.2);

\draw[xshift=9cm, style=thick] (180:1) .. controls (240:0.7) and (300:0.7) .. (360:1);
\draw[xshift=9cm, style=thick, color=white, line width=5pt] (110:1) -- (320:1);
\draw[xshift=9cm, style=thick] (110:1) -- (320:1);
\draw[xshift=9cm, style=thick, color=white, line width=5pt] (70:1) -- (220:1);
\draw[xshift=9cm, style=thick] (70:1) -- (220:1);
\draw[xshift=9cm, dashed] (-0.25,0.8)--(0,0.8) node[above]{\tiny s} -- (0.25,0.8);
\end{tikzpicture}

\caption{A connecting arc in the triangular region can be replaced by $s$.}\label{RIIIa}
\end{figure}
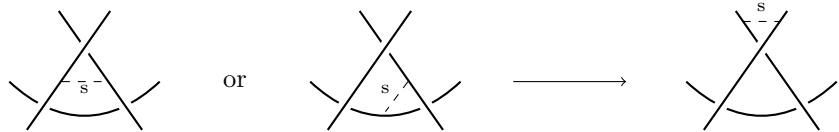

\item[Case 10] $R_i$ is an $R_{IIIb}$-move.
This move can be written as the composition of two $R_{II}$-moves and a $R_{IIIa}$-move. Thus we do not need to consider this case separately.
\end{description}

\end{proof}

\section{Example}

We now give an example of the algorithm we have just described. Figure \ref{exrmove} shows a sequence of $R$-moves performed on the unknot. This sequence begins and ends with elements of $\mathcal{D}_1$, so the Main Theorem tells us that we can construct a sequence of $M$-moves taking the first diagram to the last.  This is shown in Figure \ref{exmmove}.

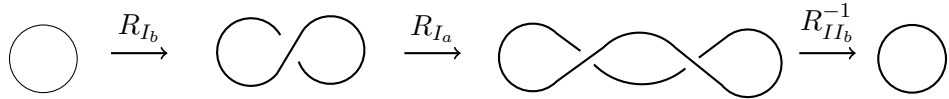
\begin{figure}[ht]
\begin{tikzpicture}[baseline=0cm, scale=1.5]
\draw (-1,-.25) circle (.3cm);
\draw[style=thick,->] (-.4,-.2) -- (-.15,-.2) node[above] {$R_{I_b}$} --
(.1,-.2);
\draw[xshift=.9cm,style=thick,scale=.2]  (1,-.2) arc (30:320:1.5cm) --
(2.0,0) arc (145:-160:1.5cm); 

\draw[xshift=2.2cm,style=thick,->] (0,-.2) -- (0.25,-.2) node[above]
{$R_{I_a}$} -- (0.5,-.2);

 \draw[xshift=4cm,style=thick,scale=.3]  (2.7,-.8)--(3.5,-.2) arc
(130:-140:1cm) -- (2.0,-.5) arc (45:135:1.5cm)--(-1.5,-1.5) arc
(315:45:1cm)--(-.8,-.6);
\draw[xshift=4cm,style=thick,scale=.3]  (-.4,-1.) arc (225:310:2.cm);

\draw[xshift=5.7cm,style=thick,->] (0,-.2) -- (0.25,-.2) node[above]
{$R_{II_b}^{-1}$} -- (0.5,-.2);
\draw[xshift=7.7cm,style=thick] (-1,-.25) circle (.3cm);
\end{tikzpicture}
 \caption {A sequence of $R$-moves taking the crossingless unknot to itself.} \label{exrmove}
 \end{figure}

 \newcommand{\Q}{\mathbb{Q}}
\renewcommand{\d}{\mathcal{D}}
\newcommand{\E}{\mathcal{E}}
\newcommand{\I}{\mathcal{I}}

 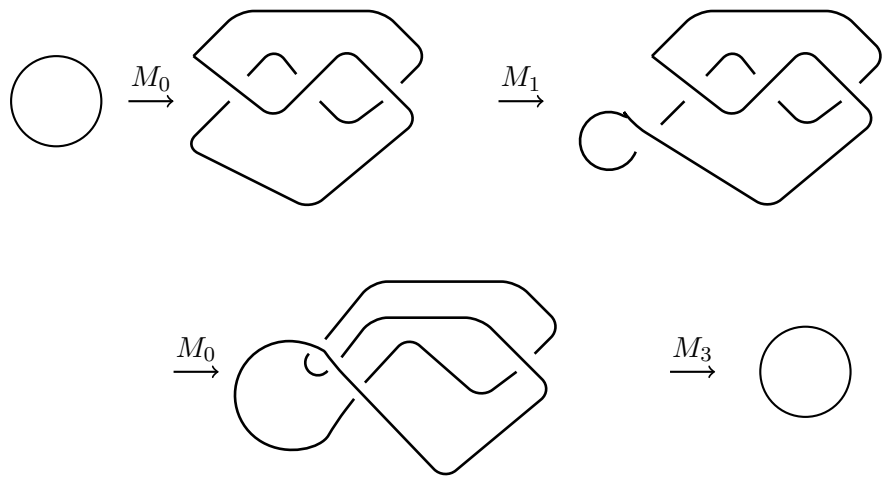
\begin{figure}
\begin{tikzpicture}[baseline=0cm, scale=1.2]
\draw [style=thick](-2.3,-.2) circle (.5cm); 
\draw[xshift=-1.5cm,style=thick,->] (0,-.2) -- (0.25,-.2) node[above] {$M_0$} -- (0.5,-.2);
\begin{scope}[scale=.4,color=black,line width=1pt,rounded corners=2mm] 
 \draw[xshift=1.3cm](-3.25,.75)--(-2,2)--(2,2)--(3.25,.75)--(2.5,0);
 \draw[xshift=1.3cm] (2,-.5)--(1,-1.25)--(.25,-.5);
 \draw[xshift=1.3cm] (-.4,0.25)--(-1,1)--(-1.75,0.2);
\draw[xshift=1.3cm](-2.25,-.5)--(-3.5,-1.75) --(0,-3.5)--(3,-1.)--(1,1)--(-1,-1)--(-3.25,.75);
\end{scope}
\draw[xshift=3.0cm,style=thick,->] (-.4,-.2) -- (-.15,-.2) node[above] {$M_1$} -- (.1,-.2);

\begin{scope}[scale=.4,color=black,line width=1pt,rounded corners=2mm] 
 \draw[xshift=14cm](-3.25,.75)--(-2,2)--(2,2)--(3.25,.75)--(2.5,0);
 \draw[xshift=14cm] (2,-.5)--(1,-1.25)--(.25,-.5);
 \draw[xshift=14cm] (-.4,0.25)--(-1,1)--(-1.75,0.2);
\draw[xshift=14cm](-2.35,-.5)--(-3.,-1.15) (-3.7,-1.9) arc (-22:-325:.8cm)--(0,-3.5)--(3,-1.)--(1,1)--(-1,-1)--(-3.25,.75);
\end{scope}

\draw[xshift=-1cm,yshift=-3cm,style=thick,->] (0,-.2) -- (0.25,-.2) node[above] {$M_0$} -- (0.5,-.2);

\begin{scope}[scale=.4,color=black,line width=1pt,rounded corners=2mm] 
 \draw[xshift=5cm,yshift=-7.5cm](-3.3,.4)--(-2,2)--(2,2)--(3.25,.75)--(2.5,0);
 \draw[xshift=5cm,yshift=-7.5cm]  (2,-.5)-- (1,-1.25)--(-1.,.5)--(-2.2,-.8);  
 \draw[xshift=5cm,yshift=-7.5cm] (-3.75,-.0) arc(135:320:.35cm);
\draw [xshift=5cm,yshift=-7.5cm] (-2.5,-1.25)--(-3.,-1.9) arc (-30:-325:1.5cm)--(0,-3.5)--(3,-1.)--(1,1)--(-2,1)--(-2.85,-.1);
\end{scope}

\draw[xshift=4.5cm,yshift=-3cm,style=thick,->] (0,-.2) -- (0.25,-.2) node[above] {$M_3$} -- (0.5,-.2);
\draw[xshift=7cm,yshift=-3cm,style=thick] (-1,-.2) circle (.5cm); 
\end{tikzpicture}
 \caption {A sequence of $M$-moves taking the crossingless unknot to itself.}\label{exmmove}
 \end{figure}

\section{Application}

As pointed out in the introduction one can assign to each knot diagram a ribbon graph $\Gamma$, i.e. a graph with an embedding on an orientable surface, from which the knot can be recovered (up to reversing the orientation of the ambient space) \cite{DFKLS:GraphsOnSurfaces}.
For an alternating knot with an alternating diagram the graph is plane, and this characterizes alternating knots.
Bollob\'as and Riordan \cite{BR:BRTorientable} introduced a three variable extension to ribbon graphs $C(X,Y_{BR},Z; \Gamma)$ of the Tutte polynomial $T(X,Y_{T}; G)$. 
For a ribbon graph $\Gamma$ whose underlying graph $G_{{\Gamma}}$ is obtained by forgetting the embedding of $\Gamma$ one has
$$C(X, Y_{BR},1;\Gamma)=T(X, Y_{BR}+1;G_{\Gamma}).$$

For a knot diagrams with associated ribbon graph $\Gamma$ the evaluation $$C(-A^4,-A^{-4}-1, (-A^{2}-A^{-2})^{-2};\Gamma)$$ yields the Kauffman bracket of the knot diagram \cite{DFKLS:GraphsOnSurfaces}, which is - up to normalizations and variable change - the Jones polynomial of the knot.   
In the case of one-vertex ribbon graphs $\Gamma$ the Bollob\'as-Riordan-Tutte polynomial simplifies significantly and becomes a two variable polynomial 
$C(-,Y_{BR},Z;\Gamma)$ \cite{BR:BRTorientable}.

Note that in \cite{CKS:QuasiTrees} an expansion of the Bollob\'as-Riordan-Tutte polynomial over certain sub-ribbon graphs called quasi-trees is given (see also \cite{DFKLS:DDD}).
This expansion simplifies significantly for one-vertex ribbon graphs.

Furthermore in \cite{DL:TuraevKhovanovHomology} a Khovanov homology theory for ribbon graphs is worked out that coincides with the regular Khovanov homology if the ribbon graph comes from a knot diagram. It will be interesting to see what simplifications the restriction to the one-vertex case gives.
 
\bibliography{OneVertexRibbon}

\newcommand{\etalchar}[1]{$^{#1}$}
\providecommand{\bysame}{\leavevmode\hbox to3em{\hrulefill}\thinspace}
\providecommand{\MR}{\relax\ifhmode\unskip\space\fi MR }
\providecommand{\MRhref}[2]{%
  \href{http://www.ams.org/mathscinet-getitem?mr=#1}{#2}
}
\providecommand{\href}[2]{#2}
\begin{thebibliography}{DFK{\etalchar{+}}10}

\bibitem[Bir74]{Birman:BraidBook}
Joan~S. Birman, \emph{{Braids, links, and mapping class groups}}, Princeton
  University Press, 1974.

\bibitem[BN02]{Bar-Natan:Khovanov}
Dror Bar-Natan, \emph{{On Khovanov's categorification of the Jones
  polynomial}}, Alg. Geom. Top. \textbf{2} (2002), no.~May, 337--370.

\bibitem[BR01]{BR:BRTorientable}
B\'{e}la Bollob\'{a}s and Oliver~M. Riordan, \emph{{A polynomial invariant of
  graphs on orientable surfaces}}, Proceedings of the London Mathematical
  Society \textbf{83} (2001), no.~3, 513--531.

\bibitem[CDR10]{CDR:Dimers}
Moshe Cohen, Oliver~T. Dasbach, and Heather~M. Russell, \emph{{A twisted dimer
  model for knots}}, arXiv:1010.5228v2 (2010), 16.

\bibitem[CKS11]{CKS:QuasiTrees}
Abhijit Champanerkar, Ilya Kofman, and Neal~W. Stoltzfus, \emph{{Quasi-tree
  expansion for the Bollobas-Riordan-Tutte polynomial}}, Bulletin of the London
  Mathematical Society \textbf{43} (2011), no.~5, 972--984.

\bibitem[DFK{\etalchar{+}}08]{DFKLS:GraphsOnSurfaces}
Oliver~T. Dasbach, David Futer, Efstratia Kalfagianni, Xiao-Song Lin, and
  Neal~W. Stoltzfus, \emph{{The Jones polynomial and graphs on surfaces}}, J.
  Comb. Theory, Ser. B \textbf{98} (2008), no.~2, 384--399.

\bibitem[DFK{\etalchar{+}}10]{DFKLS:DDD}
\bysame, \emph{{Alternating Sum Formulae for the Determinant and Other Link
  Invariants}}, J. Knot Theory Ramifications \textbf{19} (2010), no.~06,
  765--782.

\bibitem[DL11]{DL:TuraevKhovanovHomology}
Oliver~T. Dasbach and Adam~M. Lowrance, \emph{{A Turaev surface approach to
  Khovanov homology}}, arXiv:1107.2344 (2011), 30.

\bibitem[Kau87a]{Kauffman:OnKnots}
Louis~H. Kauffman, \emph{{On Knots}}, Princeton University Press, 1987.

\bibitem[Kau87b]{Kauffman:StateModels}
\bysame, \emph{{State models and the Jones polynomial}}, Topology \textbf{26}
  (1987), no.~3, 395--407.

\bibitem[Kho00]{Khovanov:homology}
Mikhail Khovanov, \emph{{A categorification of the Jones polynomial}}, Duke
  Math. J. \textbf{101} (2000), no.~3, 359--426.

\bibitem[Lic97]{Lickorish:KnotTheoryBook}
W.~B.~Raymond Lickorish, \emph{{An introduction to knot theory}}, Springer,
  1997.

\bibitem[Man02]{Manturov:BracketCalculus}
Vassily~O. Manturov, \emph{{Knots and the Bracket Calculus}}, Acta Appl. Math.
  \textbf{74} (2002), 293--336.

\bibitem[MOS09]{MOS:CombinatorialKnotFloer}
Ciprian Manolescu, Peter Ozsv\'{a}th, and Sucharit Sarkar, \emph{{A
  combinatorial description of knot Floer homology}}, Ann. of Math.
  \textbf{169} (2009), no.~2, 633--660.

\bibitem[Tur87]{Turaev:SimpleProof}
Vladimir~G. Turaev, \emph{{A simple proof of the Murasugi and Kauffman theorems
  on alternating links}}, Enseign. Math.(2) \textbf{33} (1987), no.~3-4,
  203--225.

\bibitem[Vog90]{Vogel:Algorithm}
Pierre Vogel, \emph{{Representation of links by braids: A new algorithm}},
  Comment. Math. Helv. \textbf{65} (1990), no.~1, 104--113.

\end{thebibliography}
\bibliographystyle {amsalpha}
 \end{document}